\newtheorem{thm}{Theorem}[section]
\newtheorem{lem}{Lemma}[section]
\theoremstyle{definition}
\newtheorem{defn}{Definition}[section]
\theoremstyle{remark}
\newtheorem{rem}{Remark}[section]
\numberwithin{equation}{section}
\newcommand{\bmm}[1]{\mbox{\boldmath{$#1$}}}
\newcommand{\cl}{\mathcal{L}}
\newcommand{\bu}{\mathbf{u}}
\newcommand{\bv}{\mathbf{v}}
\newcommand{\bV}{\mathbf{V}}
\newcommand{\bw}{\mathbf{w}}
\newcommand{\bff}{\mathbf{f}}
\newcommand{\curl}{\mbox{curl}}
\title[Anomalous Localized Resonance in elastostatics]{On Anomalous Localized Resonance for the Elastostatic System}
\author{Hongjie Li}
\address{Department of Mathematics, Hong Kong Baptist University, Kowloon Tong, Hong Kong SAR.\vspace*{-4mm}}
\address{\vspace*{-4mm}and}
\address{HKBU Institute of Research and Continuing Education, Virtual University Park, Shenzhen, P. R. China.}
\email{hongjie$_{-}$li@yeah.net}
\author{Hongyu Liu}
\address{Department of Mathematics, Hong Kong Baptist University, Kowloon Tong, Hong Kong SAR.\vspace*{-4mm}}
\address{\vspace*{-4mm}and}
\address{HKBU Institute of Research and Continuing Education, Virtual University Park, Shenzhen, P. R. China.}
\email{hongyu.liuip@gmail.com; hongyuliu@hkbu.edu.hk}
\begin{document}
\maketitle

\begin{abstract}
We consider the anomalous localized resonance due to a plasmonic structure for the elastostatic system in $\mathbb{R}^2$. The plasmonic structure takes a general core-shell-matrix form with the metamaterial located in the shell. If there is no core, we show that resonance occurs for a very broad class of sources. If the core is nonempty and of an arbitrary shape, we show that there exists a critical radius such that anomalous localized resonance (ALR) occurs. Our argument is based on a variational technique by making use of the primal and dual variational principles for the elastostatic system, along with the construction of suitable test functions.

\medskip

\medskip

\noindent{\bf Keywords:}~~anomalous localized resonance, plasmonic material, elastostatics

\noindent{\bf 2010 Mathematics Subject Classification:}~~35B34; 74E99; 74J20

\end{abstract}

\section{Introduction}

Recently, there is growing interest in studying the resonance phenomena for materials with negative parameters and their connection to invisibility cloaking. The mathematical principle lies in that the ellipticity of the governing PDE system is lost in the limiting case as the loss parameter approaching zero, and hence resonance occurs for a suitable forcing term at or near the resonant frequency. Moreover, the resonance strongly depends on the location of the source. Hence, the term ``anomalous localized resonance" is used in the literature, and those negative materials are referred to as ``plasmonic materials".

The anomalous localized resonance (ALR) has been extensively investigated for optical parameter distributions; respectively, modelled by the Laplace equation \cite{Acm13,Ack13,Ack14,AK,Bos10,Brl07,CKKL,Klsap,LLL,GWM1,GWM2,GWM3,GWM4,GWM6,GWM7,GWM8,GWM9}, the Helmholtz equation \cite{ADM,AMRZ,AKL,KLO}, and the Maxwell system \cite{ARYZ}. In the literature, there are two approaches that have been proposed in analyzing the resonance behaviors: one is based on the analysis of the spectral properties of the Neumann-Poincar\'e operator via the layer potential theory \cite{Ack13} and the other one is based on variational arguments via the use of primal and dual variational principles for the corresponding PDE systems \cite{Klsap}. The spectral approach initiated in \cite{Ack13} by Ammari et al. for the Laplace equation can yield a very accurate characterization of the anomalous localized resonance as well as its connection to the invisibility cloaking effects. The spectral approach has been extensively used in characterizing the anomalous localized resonances (ALR) and the cloaking by ALR (CALR) for the Laplace equation and the Helmholtz system \cite{Acm13,Ack14,AK,AKL,CKKL}. However, the spectral approach requires accurate spectral information of the Neumann-Poincar\'e operator and is mainly restricted to spherical and elliptical geometries. The variational approach initiated in \cite{Klsap} by Kohn et al. for the Laplace equation can deal with general geometries, but can only be used to show the ALR results. The variational approach was followed in \cite{LLL} to show the ALR results for the Laplace equation in three dimensions.

In a very recent work by Ando et al. \cite{AKKY}, the spectral theory of Neumann-Poincar\'e operator was extended from the Laplace equation to the Lam\'e system governing the elastostatics. Using the spectral approach, the authors also established the ALR and CARL results in the spherical and elliptic geometries for the elastostatic system. In this paper, we aim to establish the ALR results for the elastostatic system via the variational approach. In what follows, we first present the mathematical formulation of our study.

 Let $\mathbf{C}(x):=(\mathrm{C}_{ijkl}(x))_{i,j,k,l=1}^N$, $x\in\mathbb{R}^N$ with $N=2,3,$ be a four-rank tensor such that
 \begin{equation}\label{eq:tensor}
 \mathrm{C}_{ijkl}(x):=\lambda(x)\delta_{ij}\delta_{kl}+\mu(x)(\delta_{ik}\delta_{jl}+\delta_{il}\delta_{jk}),\ \ x\in\mathbb{R}^N,
 \end{equation}
 where $\lambda, \mu\in\mathbb{C}$ are complex-valued functions, and $\delta$ is the Kronecker delta. $\mathbf{C}(x)$ describes an isotropic elasticity tensor distributed in the space, where $\lambda$ and $\mu$ are called the Lam\'e constants. For a regular elastic material, it is required that the Lam\'e constants satisfy the strong convexity condition,
 \begin{equation}\label{eq:convex}
 \mu>0\qquad\mbox{and}\qquad N\lambda+2\mu>0.
 \end{equation}
 The existence of exotic elastic materials with negative stiffness was shown in \cite{KM} and \cite{LLBW}, which we shall generally refer to as the plasmonic materials in the current article.
We shall write $\mathbf{C}_{\lambda,\mu}$ to specify the dependence of the elastic tensor on the Lam\'e parameters $\lambda$ and $\mu$. Let $\Sigma$ and $\Omega$ be bounded domains in $\mathbb{R}^N$ with connected Lipschitz boundaries such that $\Sigma\Subset\Omega$. Consider an elastic parameter distribution $\mathbf{C}_{\widetilde{\lambda}, \widetilde{\mu}}$ given with
\begin{equation}\label{eq:tensor1}
\big(\widetilde{\lambda}(x), \widetilde{\mu}(x)\big)=\big(A(x)+\mathrm{i}\delta\big)(\lambda, \mu), \quad x\in\mathbb{R}^N,
\end{equation}
where $\delta\in\mathbb{R}_+$ denotes a loss parameter; $(\lambda, \mu)$ are two constants satisfying the strong convexity condition \eqref{eq:convex}; and $A(x)$ has a matrix-shell-core character in the sense that
\begin{equation}\label{eq:tensor2}
A(x)=\begin{cases}
+1,\qquad & x\in\Sigma,\medskip\\
c,\qquad & x\in\Omega\backslash\overline{\Sigma},\medskip\\
+1,\qquad & x\in\mathbb{R}^N\backslash\overline{\Omega},
\end{cases}
\end{equation}
where $c$ is constant that will be specified later. In principle, $c$ will be negative-valued so that \eqref{eq:tensor1} yields a plasmonic structure.
Let $\mathbf{u}_\delta(x)\in\mathbb{C}^N$, $x\in\mathbb{R}^N$, denote the displacement field in the space that is occupied by the elastic material distribution $\mathbf{C}_{\widetilde{\lambda},\widetilde{\mu}}$ as described above. In the quasi-static regime, $\mathbf{u}_\delta(x)$ verifies the following Lam\'e system
\begin{equation}\label{eq:lame1}
\begin{cases}
&\mathcal{L}_{\widetilde\lambda,\widetilde\mu} \mathbf{u}_\delta(x)=\mathbf{f}(x),\quad x\in\mathbb{R}^N,\medskip\\
&\mathbf{u}_\delta(x)=\mathcal{O}\big(\|x\|^{-1}\big)\quad\mbox{as}\ \ \|x\|\rightarrow+\infty,
\end{cases}
\end{equation}
where $\mathbf{f}$ is an $\mathbb{R}^N$-valued function that is compactly supported outside $\Omega$, signifying a force term. It is required that
\begin{equation}\label{eq:source1}
\int_{\mathbb{R}^N}\mathbf{f}(x)\ dV(x)=0.
\end{equation}
In \eqref{eq:lame1}, the partial differential operator $\mathcal{L}_{\widetilde{\lambda},\widetilde{\mu}}$ is defined as follows,
\begin{equation}\label{eq:lame2}
\mathcal{L}_{\widetilde{\lambda},\widetilde{\mu}} \mathbf{u}_\delta:=\nabla\cdot\mathbf{C}_{\widetilde{\lambda},\widetilde{\mu}} {\nabla}^s\mathbf{u}_\delta=\widetilde{\mu}\Delta\mathbf{u}_\delta+(\widetilde{\lambda}+\widetilde{\mu})\nabla\nabla\cdot\mathbf{u}_\delta,
\end{equation}
where ${\nabla}^s$ denotes the symmetric gradient given by
\[
{\nabla}^s\mathbf{u}_\delta:=\frac 1 2(\nabla\mathbf{u}_\delta+\nabla\mathbf{u}_\delta^T),
\]
and $T$ signifies the matrix transpose.

Next, for $\mathbf{u}\in H^1_{\text{loc}}(\mathbb{R}^N)$ and $\mathbf{v}\in H^1_{\text{loc}}(\mathbb{R}^N)$, we introduce
\begin{equation}\label{eq:energy1}
\mathbf{P}_{\lambda,\mu}(\mathbf{u},\mathbf{v}):=\int_{\mathbb{R}^N}\big[\lambda(\nabla\cdot\mathbf{u})\overline{(\nabla\cdot\mathbf{v})}(x)+2\mu\nabla^s\mathbf{u}:\overline{\nabla^s\mathbf{v}}(x) \big]\ d V(x),
\end{equation}
where and also in what follows, for two matrices $\mathbf{A}=(a_{ij})_{i,j=1}^N$ and $\mathbf{B}=(b_{ij})_{i,j=1}^N$,
\[
\mathbf{A}:\mathbf{B}=\sum_{i,j=1}^N a_{ij}b_{ij}.
\]
For the solution $\mathbf{u}_\delta$ to \eqref{eq:lame1}, we define
\begin{equation}\label{eq:energy2}
\mathbf{E}_\delta(\mathbf{C}_{\widetilde{\lambda},\widetilde{\mu}}, \mathbf{f}):=\frac{\delta}{2}\mathbf{P}_{\lambda,\mu}(\mathbf{u}_\delta, \mathbf{u}_\delta).
\end{equation}
$\mathbf{E}_\delta$ defined above signifies the energy dissipated into heat of the elastostatic system \eqref{eq:lame1}.

\begin{defn}\label{def:resonant}
The configuration $(\mathbf{C}_{\widetilde{\lambda},\widetilde{\mu}}, \mathbf{f})$ in \eqref{eq:lame1} is said to be \emph{resonant} if
\begin{equation}\label{eq:def1}
\lim_{\delta\rightarrow+0}\mathbf{E}_\delta(\mathbf{C}_{\widetilde{\lambda},\widetilde{\mu}},\mathbf{f})=+\infty;
\end{equation}
and it is said to be \emph{weakly resonant} if
\begin{equation}\label{eq:def2}
\limsup_{\delta\rightarrow+0}\mathbf{E}_\delta(\mathbf{C}_{\widetilde{\lambda},\widetilde{\mu}},\mathbf{f})=+\infty.
\end{equation}
\end{defn}

As mentioned earlier, the resonance for the Lam\'e system \eqref{eq:lame1} has been investigated in \cite{AKKY} in two dimensions. The results that we shall establish in this paper distinguish from those in \cite{AKKY} in the following aspects.

\begin{enumerate}
\item The results in \cite{AKKY} were established by using the spectral approach, whereas in this paper we shall make use of variational approach to establish our resonance results.

\item In \cite{AKKY}, it is always assumed that the core is empty, namely $\Sigma=\emptyset$, whereas in this paper, $\Sigma$ could be an empty set, or a non-empty set of an arbitrary shape. In \cite{AKKY}, the exterior domain, namely $\Omega$ could be a disk or an ellipse, whereas in this paper, we take $\Omega$ to be a disk. Nevertheless, we would like to emphasize that our study can be extended to the case that $\Omega$ is an ellipse by following similar arguments.

\item In \cite{AKKY}, the force term $\mathbf{f}$ can be a function compactly supported in $\mathbb{R}^2\backslash\overline{\Omega}$, whereas in our study, we assume that $\mathbf{f}$ is distributed on a circular curve enclosing $\Omega$. Our method also allows for more general distributional sources by using the principle of superpositions.

\item If the core $\Sigma$ is an empty set, then in both \cite{AKKY} and the present paper, the resonance results are established. If the exterior domain $\Omega$ is an ellipse with $\Sigma=\emptyset$, then ALR and CALR results were established in \cite{AKKY}. In our study, by assuming that $\Omega$ is a disk and $\Sigma$ is nonempty, we establish the ALR result.

\end{enumerate}

We shall mainly focus on the two-dimensional case, namely $N=2$, and unless otherwise specified, we shall take that
\begin{equation}\label{eq:tensor3}
c:=-\frac{\lambda+\mu}{\lambda+3\mu}
\end{equation}
in \eqref{eq:tensor2}. Here, it is easily seen from \eqref{eq:convex} with $N=2$ that $c$ defined in \eqref{eq:tensor3} is negatively valued. Hence, \eqref{eq:tensor1} yields a plasmonic structure. In Section~\ref{sect:4d}, we shall remark our study for the three-dimensional case.

The rest of the paper is organized as follows. In Section 2, we establish the primal and dual variational principles for the elastostatic system. Section 3  is devoted to the resonance and non-resonance results. We conclude our study in Section 4 by discussing the extension to the three-dimensional case.

\section{Variational principles for the elastostatic system}

In this section, we establish the primal and variational principles for the elastostatic system \eqref{eq:lame1}, which shall play a critical role in our subsequent resonance study. Throughout the present section, we assume that the force term $\mathbf{f}=(\mathbf{f}_i)_{i=1}^N\in H^{-1}(\mathbb{R}^N)^N$ in \eqref{eq:lame1} with a compact support and a zero average in the sense that
\begin{equation}\label{eq:average}
\langle \mathbf{f}_i, \mathbf{1}\rangle=0,\quad i=1,2,\ldots,N,
\end{equation}
where $\mathbf{1}: \mathbb{R}^N\rightarrow\mathbb{R}$ is the constant function, $\mathbf{1}(x)=1$ for all $x\in\mathbb{R}^N$.

\subsection{Preliminaries for the elastostatic system}

We collect some preliminary knowledges on the elastostatic system \eqref{eq:lame1}. Those are standard results, but we cannot find a convenient reference. In what follows, we let $B_R$ with $R\in\mathbb{R}_+$ denote a central ball of radius $R$ in $\mathbb{R}^N$. Throughout, without loss of generality, we assume that there exists $R_0\in\mathbb{R}_+$ such that $supp(\mathbf{f})\subset B_{R_0}$. For a connected Lipschitz surface $\Gamma$ in $\mathbb{R}^N\backslash\overline{\Omega}$, enclosing a bounded domain $D$, we define the conormal derivative $\partial \bu_\delta/ \partial \bm{\nu}$ on the boundary $\Gamma=\partial D$ by
\begin{equation}\label{eq:normald}
\frac{\partial \bu_\delta}{\partial \bm{\nu}}= \widetilde\lambda(\nabla \cdot \ \bu_\delta)\bm{\nu}+ \widetilde\mu(\nabla \bu_\delta + \nabla \bu_\delta^T)\bm{\nu}\quad \mbox{on}\ \ \Gamma,
\end{equation}
where $\bm{\nu}$ is the outward unit normal vector to the boundary $\Gamma$. Let $\bu_\delta\in H_{\text{loc}}^1(\mathbb{R}^N)^N$ satisfy the Lam\'e system, namely, $\mathcal{L}_{\widetilde\lambda,\widetilde\mu}\bu_\delta=\mathbf{f}$. For any test function $\mathbf{v}\in H_{\text{loc}}^1(\mathbb{R}^N)^N$, we recall the following Green's formula,
\begin{equation}\label{eq:greenformula}
\int_{\partial D}\overline{\bv} \cdot\frac{\partial\bu_\delta}{\partial\bm{\nu}}\, ds(x)=\int_{D}\overline{\bv}\cdot\mathcal{L}_{\widetilde\lambda,\widetilde\mu}\bu_\delta\, dV(x)+\mathbf{P}_{\widetilde{\lambda},\widetilde{\mu}}(\bu_\delta,\bv),
\end{equation}
where $\mathbf{P}(\bu_\delta,\bv)$ is given by \eqref{eq:energy1} with the integration domain replaced by $D$. Using \eqref{eq:greenformula}, the weak solution $\bu_\delta\in H^1_{\text{loc}}(\mathbb{R}^N)^N$ to \eqref{eq:lame1} is given in the sense that
\begin{equation}\label{eq:weak1}
\mathbf{P}_{\widetilde\lambda,\widetilde\mu}(\bu_\delta,\bv)=\langle\mathbf{f},\overline\bv\rangle,
\end{equation}
where $\bv\in H_{\text{loc}}^1(\mathbb{R}^N)^N$, compactly supported in $B_R$ for any $R\geq R_0$. In order to have a functional analytic framework for the variational formulation, we introduce the following function space
\begin{equation}\label{eq:weak2}
\mathcal{S}:=\big\{ \bu\in H_{\text{loc}}^1(\mathbb{R}^N)^N;\ \nabla\bu\in L^2(\mathbb{R}^N)^{N\times N}\ \ \mbox{and}\ \ \int_{B_{R_0}}\bu=0 \big\}.
\end{equation}
Consider the following sesquilinear form, $\mathscr{B}(\cdot,\cdot): \mathcal{S}\times\mathcal{S}\rightarrow\mathbb{C}$, defined by
\begin{equation}\label{eq:bilinearform}
\mathscr{B}(\bu,\bv):=-\mathrm{i}\cdot\mathbf{P}_{\widetilde\lambda,\widetilde\mu}(\bu,\bv).
\end{equation}
It is straightforward to verify that $\mathscr{B}$ is bounded and by using the Poincar\'e-Wirtinger inequality, coercive. Hence, by the Lax-Milgram theorem, one can show that there exists a unique solution $\bu_\delta\in\mathcal{S}$ to the Lam\'e system \eqref{eq:lame1}. Finally, by using the fact that the Kelvin matrix of the fundamental solution  $\bm{\Phi}=(\Phi_{ij})_{i,j=1}^N$ to the PDO $\mathcal{L}_{\lambda,\mu}$ is given by (cf. \cite{Kup})
\begin{equation}\label{eq:funds}
\Phi_{ij}=\begin{cases}
\displaystyle{\frac{\alpha}{2\pi}\delta_{ij}\ln\|x\|-\frac{\beta}{2\pi}\frac{x_ix_j}{\|x\|^2}}\quad &\mbox{when}\ \  N=2,\medskip\\
\displaystyle{-\frac{\alpha}{4\pi}\frac{\delta_{ij}}{\|x\|}-\frac{\beta}{2\pi}\frac{x_ix_j}{\|x\|^3}}\quad &\mbox{when}\ \ N=3,
\end{cases}
\end{equation}
where
\[
\alpha=\frac 1 2\left(\frac 1 \mu+\frac{1}{2\mu+\lambda}\right)\quad\mbox{and}\quad \beta=\frac 1 2\left(\frac 1 \mu-\frac{1}{2\mu+\lambda}\right).
\]
In \eqref{eq:funds}, $x=(x_i)_{i=1}^N\in\mathbb{R}^N$ and $\delta_{ij}$ is the Kronecker delta. Using the form of the fundamental solution in \eqref{eq:funds}, one can show by direct calculations that the solution $\bu_\delta\in \mathcal{S}$ to $\mathcal{L}_{\widetilde\lambda,\widetilde\mu}\bu_\delta=\mathbf{f}$ possess the asymptotic behaviour, $\bu_\delta(x)=\mathcal{O}(\|x\|^{-1})$ as $\|x\|\rightarrow+\infty$.

\subsection{Primal and dual variational principles}

We now establish the primal and dual variational principles for the elastostatic system \eqref{eq:lame1}. For a fixed force term $\mathbf{f}\in H^{-1}(\mathbb{R}^N)^N$ and for the solution $\mathbf{u}_\delta\in H^1_{\text{loc}}(\mathbb{R}^N)^N: \mathbb{R}^N\rightarrow\mathbb{C}^N$, we set
\begin{equation}\label{eq:decomp1}
\mathbf{u}_\delta=\mathbf{v}_\delta+\mathrm{i}\frac{1}{\delta}\mathbf{w}_\delta,
\end{equation}
where $\mathbf{v}_\delta, \mathbf{w}_\delta\in H^1_{\text{loc}}(\mathbb{R}^N)^N:\mathbb{R}^N\rightarrow\mathbb{R}^N$ satisfying $\mathbf{v}_\delta=\mathcal{O}(\|x\|^{-1})$ and $\mathbf{w}_\delta=\mathcal{O}(\|x\|^{-1})$ as $\|x\|\rightarrow+\infty$. By straightforward calculations, one can show that the elastostatic system \eqref{eq:lame1} for $\mathbf{u}_\delta$ is equivalent to the following coupled system for the two real functions $\mathbf{v}_\delta$ and $\mathbf{w}_\delta$:
\begin{align}
& A\mathcal{L}_{\lambda,\mu}\mathbf{v}_\delta-\mathcal{L}_{\lambda,\mu}\mathbf{w}_\delta=\mathbf{f},\label{eq:couple1}\\
& A\mathcal{L}_{\lambda,\mu}\mathbf{w}_\delta+\delta^2\mathcal{L}_{\lambda,\mu}\mathbf{v}_\delta=0,\label{eq:couple2}
\end{align}
where $A$ is given in \eqref{eq:tensor2} and \eqref{eq:tensor3}, and $(\lambda, \mu)$ are the two regular Lam\'e constants in \eqref{eq:tensor1}. Furthermore, by direct computations, one has that
\begin{equation}\label{eq:energy3}
\begin{split}
\mathbf{E}_\delta(\mathbf{u}_\delta):=&\mathbf{E}_\delta(\mathbf{C}_{\widetilde{\lambda},\widetilde{\mu}},\mathbf{f})=\frac{\delta}{2}\mathbf{P}_{\lambda,\mu}(\mathbf{u}_\delta,\mathbf{u}_\delta)\\
=&\frac{\delta}{2}\mathbf{P}_{\lambda,\mu}(\mathbf{v}_\delta,\mathbf{v}_\delta)+\frac{1}{2\delta}\mathbf{P}_{\lambda,\mu}(\mathbf{w}_\delta,\mathbf{w}_\delta),
\end{split}
\end{equation}
where $\mathbf{E}_\delta$ is given in \eqref{eq:energy2} and $\mathbf{P}$ is given in \eqref{eq:energy1}.

Next, we introduce the following energy functional
\begin{equation}\label{eq:ef1}
\mathbf{I}_\delta(\mathbf{v},\mathbf{w}):=\frac\delta 2 \mathbf{P}_{\lambda,\mu}(\mathbf{v},\mathbf{v})+\frac{1}{2\delta}\mathbf{P}_{\lambda,\mu}(\mathbf{w},\mathbf{w})\quad\mbox{for}\ \ \ (\mathbf{v}, \mathbf{w})\in \mathcal{S}\times\mathcal{S},
\end{equation}
and consider the following optimization problem:
\begin{equation}\label{eq:primal}
\begin{split}
&\mbox{Minimize $\mathbf{I}_\delta(\mathbf{v},\mathbf{w})$ over all pairs $(\mathbf{v},\mathbf{w})\in \mathcal{S}\times\mathcal{S}$ }\\
&\mbox{subject to the PDE constraint } A\mathcal{L}_{\lambda,\mu}\mathbf{v}-\mathcal{L}_{\lambda,\mu}\mathbf{w}=\mathbf{f}.
\end{split}
\end{equation}
In the sequel, we shall refer to \eqref{eq:primal} as the primal variational problem for the elastostatic system \eqref{eq:lame1}, or equivalently \eqref{eq:couple1}-\eqref{eq:couple2}.

We have the following result about the primal problem \eqref{eq:primal}.

\begin{lem}\label{lem:primal}
The primal variational problem \eqref{eq:primal} is equivalent to the elastic problem \eqref{eq:lame1} in the following sense.
\begin{enumerate}
\item The infimum
\begin{equation}\label{eq:primalp1}
\inf \big\{\mathbf{I}_\delta(\mathbf{v},\mathbf{w});\, A\mathcal{L}_{\lambda,\mu}\mathbf{v}-\mathcal{L}_{\lambda,\mu}\mathbf{w}=\mathbf{f} \big\}
\end{equation}
is attainable at a pair $(\mathbf{v}_\delta,\mathbf{w}_\delta)\in \mathcal{S}\times\mathcal{S}$.\medskip

\item The minimizing pair $(\mathbf{v}_\delta,\mathbf{w}_\delta)$ is unique, up to an additive constant, such that the function $\mathbf{u}_\delta:=\mathbf{v}_\delta+\mathrm{i}\delta^{-1}\mathbf{w}_\delta$ is the unique solution to the elastic problem \eqref{eq:lame1}.\medskip

\item For the solution in (2), the energies coincide, namely,
\begin{equation}\label{eq:primalp2}
\mathbf{E}_\delta(\mathbf{u}_\delta)=\mathbf{I}_\delta(\mathbf{v}_\delta,\mathbf{w}_\delta).
\end{equation}
\end{enumerate}
\end{lem}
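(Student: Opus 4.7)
The plan is to leverage the Lax--Milgram solution $\mathbf{u}_\delta\in\mathcal{S}$ to \eqref{eq:lame1} recorded in the preliminaries, and to verify directly that its real and imaginary parts realize the minimum in \eqref{eq:primal} via an orthogonality-type identity. This is cleaner than running the direct method of the calculus of variations from scratch, since a strong candidate is already in hand.

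First I would dispatch claim (3) together with feasibility. The decomposition \eqref{eq:decomp1} applied to $\mathbf{u}_\delta$ produces real fields $\mathbf{v}_\delta,\mathbf{w}_\delta\in\mathcal{S}$, and separating the real and imaginary parts of $\mathcal{L}_{\widetilde\lambda,\widetilde\mu}\mathbf{u}_\delta=\mathbf{f}$ using $\widetilde\lambda=(A+\mathrm{i}\delta)\lambda$, $\widetilde\mu=(A+\mathrm{i}\delta)\mu$ and the real-valuedness of $\mathbf{f}$ yields exactly the coupled system \eqref{eq:couple1}--\eqref{eq:couple2}, so $(\mathbf{v}_\delta,\mathbf{w}_\delta)$ is feasible for \eqref{eq:primal}. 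Expanding $\mathbf{P}_{\lambda,\mu}(\mathbf{u}_\delta,\mathbf{u}_\delta)$ via sesquilinearity, the mixed terms $\mathrm{i}\delta^{-1}\mathbf{P}_{\lambda,\mu}(\mathbf{w}_\delta,\mathbf{v}_\delta)-\mathrm{i}\delta^{-1}\mathbf{P}_{\lambda,\mu}(\mathbf{v}_\delta,\mathbf{w}_\delta)$ cancel by the symmetry of $\mathbf{P}_{\lambda,\mu}$ on real-valued fields, producing identity (3) after multiplying by $\delta/2$.

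The minimality claim is the heart of the argument. For any feasible pair $(\mathbf{v},\mathbf{w})$, set $\bm{\phi}:=\mathbf{v}-\mathbf{v}_\delta$ and $\bm{\psi}:=\mathbf{w}-\mathbf{w}_\delta$, so that $A\mathcal{L}_{\lambda,\mu}\bm{\phi}=\mathcal{L}_{\lambda,\mu}\bm{\psi}$ distributionally. Quadratic expansion together with the real-valued symmetry of $\mathbf{P}_{\lambda,\mu}$ gives
\begin{equation*}
\mathbf{I}_\delta(\mathbf{v},\mathbf{w})-\mathbf{I}_\delta(\mathbf{v}_\delta,\mathbf{w}_\delta)=\delta\,\mathbf{P}_{\lambda,\mu}(\mathbf{v}_\delta,\bm{\phi})+\delta^{-1}\mathbf{P}_{\lambda,\mu}(\mathbf{w}_\delta,\bm{\psi})+\mathbf{I}_\delta(\bm{\phi},\bm{\psi}),
\end{equation*}
and the critical step is to cancel the linear cross terms. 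To that end, introduce the $A$-weighted companion $\mathbf{Q}(\mathbf{u},\mathbf{v}):=\int_{\mathbb{R}^N}A(x)[\lambda(\nabla\cdot\mathbf{u})\overline{(\nabla\cdot\mathbf{v})}+2\mu\nabla^s\mathbf{u}:\overline{\nabla^s\mathbf{v}}]\,dV(x)$, so that the weak forms of \eqref{eq:couple1}--\eqref{eq:couple2} read $\mathbf{Q}(\mathbf{v}_\delta,\bm{\xi})-\mathbf{P}_{\lambda,\mu}(\mathbf{w}_\delta,\bm{\xi})=\langle\mathbf{f},\overline{\bm{\xi}}\rangle$ and $\mathbf{Q}(\mathbf{w}_\delta,\bm{\xi})+\delta^2\mathbf{P}_{\lambda,\mu}(\mathbf{v}_\delta,\bm{\xi})=0$ for every $\bm{\xi}\in\mathcal{S}$. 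Testing the second relation with $\bm{\xi}=\bm{\phi}$ produces $\delta^2\mathbf{P}_{\lambda,\mu}(\mathbf{v}_\delta,\bm{\phi})=-\mathbf{Q}(\mathbf{w}_\delta,\bm{\phi})$; on the other hand, the homogeneous constraint on $(\bm{\phi},\bm{\psi})$ tested against $\mathbf{w}_\delta$, together with the symmetry of $\mathbf{Q}$ and $\mathbf{P}_{\lambda,\mu}$ on real-valued fields, gives $\mathbf{Q}(\mathbf{w}_\delta,\bm{\phi})=\mathbf{P}_{\lambda,\mu}(\mathbf{w}_\delta,\bm{\psi})$. Chaining these two identities shows the cross terms sum to zero, so $\mathbf{I}_\delta(\mathbf{v},\mathbf{w})=\mathbf{I}_\delta(\mathbf{v}_\delta,\mathbf{w}_\delta)+\mathbf{I}_\delta(\bm{\phi},\bm{\psi})\geq\mathbf{I}_\delta(\mathbf{v}_\delta,\mathbf{w}_\delta)$, which proves both attainability (1) and the minimizing property. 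Equality forces $\mathbf{P}_{\lambda,\mu}(\bm{\phi},\bm{\phi})=\mathbf{P}_{\lambda,\mu}(\bm{\psi},\bm{\psi})=0$; by the strong convexity \eqref{eq:convex} and Korn's inequality one obtains $\nabla^s\bm{\phi}=\nabla^s\bm{\psi}=0$, and the $L^2$-gradient condition built into \eqref{eq:weak2} forces the rigid-motion remainders to be constants, which is precisely the uniqueness statement of (2).

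The principal obstacle is making rigorous sense of the weak form of \eqref{eq:couple1}--\eqref{eq:couple2} across the interfaces $\partial\Sigma$ and $\partial\Omega$, where the piecewise constant coefficient $A(x)$ jumps: the identity $A\mathcal{L}_{\lambda,\mu}\mathbf{v}_\delta=\nabla\cdot(A\mathbf{C}_{\lambda,\mu}\nabla^s\mathbf{v}_\delta)$ is valid only in the distributional sense with a built-in transmission condition for the normal stress, and this is exactly what the auxiliary form $\mathbf{Q}$ encodes when one applies Green's identity \eqref{eq:greenformula} on each subdomain and sums. One must also verify that the Kelvin-type decay $\mathbf{u}_\delta(x)=O(\|x\|^{-1})$ from \eqref{eq:funds}, combined with $\nabla\mathbf{u}_\delta\in L^2(\mathbb{R}^N)$ from \eqref{eq:weak2}, allows the boundary contributions at infinity to be discarded when testing against elements of $\mathcal{S}$; this is routine but indispensable for the orthogonality identity to close.
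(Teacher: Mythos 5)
Your proof is correct, but it takes a genuinely different route from the paper's. The paper runs the classical variational narrative: it asserts attainability of the infimum from convexity of $\mathbf{I}_\delta$ and nonemptiness of the constraint set, then shows via the Euler--Lagrange equation $\partial_\tau\mathbf{I}_\delta(\mathbf{v}_\delta+\tau\widetilde{\mathbf{v}},\mathbf{w}_\delta+\tau\widetilde{\mathbf{w}})|_{\tau=0}=0$ over admissible variations satisfying $A\mathcal{L}_{\lambda,\mu}\widetilde{\mathbf{v}}-\mathcal{L}_{\lambda,\mu}\widetilde{\mathbf{w}}=0$ that any minimizer weakly satisfies \eqref{eq:couple2}, hence together with the constraint \eqref{eq:couple1} yields the solution of \eqref{eq:lame1}; uniqueness is then pulled back from uniqueness of \eqref{eq:lame1}. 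You go the other way: you take the Lax--Milgram solution $\mathbf{u}_\delta$ as given, verify its real/imaginary parts are feasible, and prove minimality by completing the square, i.e.\ the identity $\mathbf{I}_\delta(\mathbf{v},\mathbf{w})=\mathbf{I}_\delta(\mathbf{v}_\delta,\mathbf{w}_\delta)+\mathbf{I}_\delta(\mathbf{v}-\mathbf{v}_\delta,\mathbf{w}-\mathbf{w}_\delta)$, with the cross terms killed by testing the weak forms of \eqref{eq:couple1}--\eqref{eq:couple2} against the differences. Your sign bookkeeping for the cancellation ($\delta\mathbf{P}_{\lambda,\mu}(\mathbf{v}_\delta,\bm{\phi})+\delta^{-1}\mathbf{P}_{\lambda,\mu}(\mathbf{w}_\delta,\bm{\psi})=0$) checks out, and the equality case plus Korn and the $L^2$-gradient condition in \eqref{eq:weak2} gives uniqueness up to a constant as claimed. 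What each approach buys: the paper's argument is the one that generalizes when no solution is known a priori, but its existence step (``convex, hence attainable'') is incomplete as written, since convexity alone does not yield attainability without coercivity and lower semicontinuity on $\mathcal{S}\times\mathcal{S}$; your version sidesteps that gap entirely because existence is inherited from the already-established well-posedness of \eqref{eq:lame1}, and the quadratic-remainder identity delivers attainability, minimality, and uniqueness in one stroke. Your closing caveats about interpreting $A\mathcal{L}_{\lambda,\mu}$ distributionally across $\partial\Sigma$ and $\partial\Omega$ and about discarding boundary terms at infinity are exactly the points that need care, and you address them appropriately via the $A$-weighted form and the decay from \eqref{eq:funds}.
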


\begin{proof}

It is directly verified that $\mathbf{I}_\delta(\mathbf{v}_\delta,\mathbf{w}_\delta)$ is a convex functional and the PDE constraint in \eqref{eq:primal} yields a nonempty set. Hence, the infimum of \eqref{eq:primalp1} is attainable; that is, there exists a pair $(\mathbf{v}_\delta,\mathbf{w}_\delta)$ such that
\begin{equation}\label{eq:primalp3}
\mathbf{I}_\delta(\mathbf{v}_\delta,\mathbf{w}_\delta)\leq \mathbf{I}_\delta(\mathbf{v},\mathbf{w})\quad\mbox{for all}\ (\mathbf{v},\mathbf{w})\ \ \mbox{satisfying}\ \ A\mathcal{L}_{\lambda,\mu}\mathbf{v}-\mathcal{L}_{\lambda,\mu}\mathbf{w}=\mathbf{f}.
\end{equation}
Next we show that $\mathbf{u}_\delta:=\mathbf{v}_\delta+\mathrm{i}\delta^{-1}\mathbf{w}_\delta$ is the solution to the elastic problem \eqref{eq:lame1}.

As a minimizer of $\mathbf{I}_\delta$, the pair $(\mathbf{v}_\delta,\mathbf{w}_\delta)$ must verify the Euler-Lagrange equation,
\begin{equation}\label{eq:primalp4}
\partial_\tau\mathbf{I}_\delta(\mathbf{v}_\delta+\tau\widetilde{\mathbf{v}}, \mathbf{w}_\delta+\tau\widetilde{\mathbf{w}})\big|_{\tau=0}=0,
\end{equation}
for every pair $(\widetilde{\mathbf{v}},\widetilde{\mathbf{w}})$ satisfying
\begin{equation}\label{eq:primalp5}
A\mathcal{L}_{\lambda,\mu}\widetilde{\mathbf{v}}-\mathcal{L}_{\lambda,\mu}\widetilde{\mathbf{w}}=0.
\end{equation}
For the energy $\mathbf{I}_{\delta}$, this equation reads
\begin{equation}
\delta \mathbf{P}_{\lambda,\mu}(\bv_{\delta}, \tilde{\bv})+\frac{1}{\delta} \mathbf{P}_{\lambda,\mu}(\bw_{\delta},\tilde{\bw})=0.
\end{equation}
With the help of Green's formula and (\ref{eq:primalp5}), the last equation yields
\begin{equation}
\begin{split}
  & -\delta \int_{\mathbb{R}^N} \cl_{\lambda,\mu} \bv_{\delta} \cdot \tilde{\bv}dV(x) -\frac{1}{\delta} \int_{\mathbb{R}^N} \bw_{\delta} \cdot  \cl_{\lambda,\mu} \tilde{\bw}d V(x) \\
  & = -\delta \int_{\mathbb{R}^N} \cl_{\lambda,\mu} \bv_{\delta} \cdot \tilde{\bv}dV(x) -\frac{1}{\delta} \int_{\mathbb{R}^N} \bw_{\delta} \cdot A \cl_{\lambda,\mu} \tilde{\bv}d V(x) \\
  & = -\frac{1}{\delta}\int_{\mathbb{R}^N}(\delta^2 \cl_{\lambda,\mu} \bv_{\delta} +A \cl_{\lambda,\mu}\bw_{\delta}) \cdot \tilde{\bv} dV(x) =0
\end{split}
\end{equation}
which is the weak form of (\ref{eq:couple2}). As a solution of (\ref{eq:couple1})-(\ref{eq:couple2}), the pair $(\bv_{\delta},\bw_{\delta})$ defines through $\bu_{\delta}:=\bv_{\delta} + \mathrm{i}\delta^{-1}\bw_{\delta}$ a solution to the original problem (\ref{eq:lame1}).

The uniqueness is a consequence of the fact that the original problem (\ref{eq:lame1}) possesses a unique solution. Finally, it is obvious that $\mathbf{E}_{\delta}(\bu_{\delta})=\mathbf{I}_{\delta}(\bv_{\delta},\bw_{\delta})$ from (\ref{eq:energy3}).

The proof is complete.
\end{proof}

\begin{rem}\label{rem:primal}
By Lemma~\ref{lem:primal}, one can readily see that for the solution $\mathbf{u}_\delta$ to \eqref{eq:lame1} and the energy $\mathbf{E}_\delta(\mathbf{u}_\delta)$ in \eqref{eq:energy3}, one has that
\begin{equation}\label{eq:primalp6}
\mathbf{E}_\delta(\mathbf{u}_\delta)\leq \mathbf{I}_\delta(\mathbf{v},\mathbf{w}),
\end{equation}
for the energy functional $\mathbf{I}_\delta$ defined in \eqref{eq:ef1} and every pair $(\mathbf{v},\mathbf{w})$ verifying the constraint specified in \eqref{eq:primal}. For the elastic configuration $(\mathbf{C}_{\widetilde{\lambda},\widetilde{\mu}},\mathbf{f})$, we shall make use of the primal variational principle via \eqref{eq:primalp6} to show the non-resonance result by constructing suitable test functions $\mathbf{v}$ and $\mathbf{w}$.
\end{rem}

We proceed to introduce the dual variational problem by defining the following energy functional
\begin{equation}\label{eq:ef2}
\mathbf{J}_\delta(\mathbf{v},\bm{\psi}):=\int_{\mathbb{R}^N}\mathbf{f}\cdot \bm{\psi}-\frac{\delta}{2}\mathbf{P}_{\lambda,\mu}(\mathbf{v},\mathbf{v})-\frac{\delta}{2}\mathbf{P}_{\lambda,\mu}(\bm{\psi},\bm{\psi})\ \mbox{for} \ (\mathbf{v}, \bm{\psi})\in \mathcal{S}\times\mathcal{S}.
\end{equation}
Consider the following optimization problem
\begin{equation}\label{eq:dual}
\begin{split}
&\mbox{Maximize $\mathbf{J}_\delta(\mathbf{v},\bm{\psi})$ over all pairs $(\mathbf{v},\bm{\psi})\in \mathcal{S}\times\mathcal{S}$}\\
&\mbox{subject to the PDE constraint } A\mathcal{L}_{\lambda,\mu}\bm{\psi}+\delta\mathcal{L}_{\lambda,\mu}\mathbf{v}=0.
\end{split}
\end{equation}
In the sequel, we shall refer to \eqref{eq:dual} as the dual variational problem for the elastostatic system \eqref{eq:lame1}, or equivalently \eqref{eq:couple1}-\eqref{eq:couple2}.

We have the following result about the dual problem \eqref{eq:dual}.

\begin{lem}\label{lem:dual}
The dual variational problem \eqref{eq:dual} is equivalent to the elastic problem \eqref{eq:lame1} in the following sense.
\begin{enumerate}
\item The supremum
\begin{equation}\label{eq:dualp1}
\sup \big\{\mathbf{J}_\delta(\mathbf{v},\bm{\psi}); A\mathcal{L}_{\lambda,\mu}\bm{\psi}+\delta\mathcal{L}_{\lambda,\mu}\bv=0 \big\}
\end{equation}
is attainable at a pair $(\mathbf{v}_\delta,\bm{\psi}_\delta)\in \mathcal{S}\times\mathcal{S}$.\medskip

\item The maximizing pair $(\mathbf{v}_\delta,\bm{\psi}_\delta)$ is unique, up to an additive constant, such that the function $\mathbf{u}_\delta:=\mathbf{v}_\delta+\mathrm{i}\bm{\psi}_\delta$ is the unique solution to the elastic problem \eqref{eq:lame1}.\medskip

\item For the solution in (2), the energies coincide, namely,
\begin{equation}\label{eq:dualp2}
\mathbf{E}_\delta(\mathbf{u}_\delta)=\mathbf{J}_\delta(\mathbf{v}_\delta,\bm{\psi}_\delta).
\end{equation}
\end{enumerate}
\end{lem}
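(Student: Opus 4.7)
The plan is to mirror the proof of Lemma~\ref{lem:primal}, with strict convexity and minimization replaced by strict concavity and maximization. First I would establish existence and uniqueness of a maximizer. The quadratic contribution $-\frac{\delta}{2}\mathbf{P}_{\lambda,\mu}(\bv,\bv)-\frac{\delta}{2}\mathbf{P}_{\lambda,\mu}(\bm{\psi},\bm{\psi})$ is strictly concave and anti-coercive on $\mathcal{S}\times\mathcal{S}$ by the positive-definiteness of $\mathbf{P}_{\lambda,\mu}$ (Korn plus the Poincar\'e--Wirtinger condition built into $\mathcal{S}$), while the linear term $\int_{\mathbb{R}^N}\mathbf{f}\cdot\bm{\psi}\,dV$ is continuous using the zero-average condition on $\mathbf{f}$. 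The admissible set is a nonempty closed affine subspace of $\mathcal{S}\times\mathcal{S}$ (it contains $(0,0)$ since the constraint is homogeneous), so $\mathbf{J}_\delta$ attains its supremum at a unique pair $(\bv_\delta,\bm{\psi}_\delta)$, up to additive constants.

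Next I would derive the Euler--Lagrange identity and recover the coupled system~\eqref{eq:couple1}--\eqref{eq:couple2}. Stationarity of $\mathbf{J}_\delta$ at the maximizer gives
\[
\int_{\mathbb{R}^N}\mathbf{f}\cdot\tilde{\bm{\psi}}\,dV=\delta\mathbf{P}_{\lambda,\mu}(\bv_\delta,\tilde\bv)+\delta\mathbf{P}_{\lambda,\mu}(\bm{\psi}_\delta,\tilde{\bm{\psi}})
\]
for every admissible variation $(\tilde\bv,\tilde{\bm{\psi}})\in\mathcal{S}\times\mathcal{S}$ satisfying $A\mathcal{L}_{\lambda,\mu}\tilde{\bm{\psi}}+\delta\mathcal{L}_{\lambda,\mu}\tilde\bv=0$. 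Following the template of Lemma~\ref{lem:primal}, I would apply Green's formula~\eqref{eq:greenformula} to rewrite $\mathbf{P}_{\lambda,\mu}(\bv_\delta,\tilde\bv)=-\int\mathcal{L}_{\lambda,\mu}\bv_\delta\cdot\tilde\bv\,dV$ (and similarly for $\bm{\psi}_\delta$), then use the constraint $\delta\mathcal{L}_{\lambda,\mu}\tilde\bv=-A\mathcal{L}_{\lambda,\mu}\tilde{\bm{\psi}}$ to eliminate $\tilde\bv$. This reduces the identity to
\[
\int_{\mathbb{R}^N}\bigl(A\mathcal{L}_{\lambda,\mu}\bv_\delta-\delta\mathcal{L}_{\lambda,\mu}\bm{\psi}_\delta-\mathbf{f}\bigr)\cdot\tilde{\bm{\psi}}\,dV=0
\]
for essentially arbitrary $\tilde{\bm{\psi}}$, and hence forces $A\mathcal{L}_{\lambda,\mu}\bv_\delta-\delta\mathcal{L}_{\lambda,\mu}\bm{\psi}_\delta=\mathbf{f}$. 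Setting $\bw_\delta:=\delta\bm{\psi}_\delta$, this is exactly \eqref{eq:couple1}, while the imposed constraint is \eqref{eq:couple2}; hence $\bu_\delta:=\bv_\delta+\mathrm{i}\bm{\psi}_\delta=\bv_\delta+\mathrm{i}\delta^{-1}\bw_\delta$ solves \eqref{eq:lame1}, and uniqueness of the maximizer transfers from the uniqueness of the solution to~\eqref{eq:lame1}.

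For the energy identity, I would test the weak formulation~\eqref{eq:weak1} with $\bv=\bu_\delta$ and take imaginary parts. Using $\widetilde\lambda=(A+\mathrm{i}\delta)\lambda$, $\widetilde\mu=(A+\mathrm{i}\delta)\mu$ together with the reality of $\mathbf{f}$, $\bv_\delta$ and $\bm{\psi}_\delta$, this produces $\delta\mathbf{P}_{\lambda,\mu}(\bu_\delta,\bu_\delta)=\int_{\mathbb{R}^N}\mathbf{f}\cdot\bm{\psi}_\delta\,dV$, i.e.\ $\int\mathbf{f}\cdot\bm{\psi}_\delta\,dV=2\mathbf{E}_\delta(\bu_\delta)$. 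Expanding $\mathbf{P}_{\lambda,\mu}(\bu_\delta,\bu_\delta)=\mathbf{P}_{\lambda,\mu}(\bv_\delta,\bv_\delta)+\mathbf{P}_{\lambda,\mu}(\bm{\psi}_\delta,\bm{\psi}_\delta)$ (the cross terms vanish by sesquilinearity of $\mathbf{P}_{\lambda,\mu}$ and the reality of the real and imaginary parts) and substituting into the definition of $\mathbf{J}_\delta$ immediately yields $\mathbf{J}_\delta(\bv_\delta,\bm{\psi}_\delta)=2\mathbf{E}_\delta(\bu_\delta)-\mathbf{E}_\delta(\bu_\delta)=\mathbf{E}_\delta(\bu_\delta)$.

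The main obstacle I expect is the Green's-formula manipulation of $\int A\mathcal{L}_{\lambda,\mu}\bu\cdot\bv\,dV$ in Step~2: since $A$ is piecewise constant with jumps across $\partial\Sigma$ and $\partial\Omega$, shuffling $\mathcal{L}_{\lambda,\mu}$ from one factor to the other produces interface contributions that must be shown to cancel thanks to the natural transmission conditions encoded in the weak formulation of the weighted Lam\'e operator, rendering $A\mathcal{L}_{\lambda,\mu}$ effectively self-adjoint on $\mathcal{S}$. Once this identity is treated in the same formal spirit as in the proof of Lemma~\ref{lem:primal}, the remaining steps proceed in direct parallel.
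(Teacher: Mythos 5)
Your proposal is correct and follows essentially the same route as the paper: the same concavity/coercivity argument for existence and uniqueness of the maximizer, and the same Euler--Lagrange computation (Green's formula plus elimination of $\tilde{\bv}$ via the constraint) to recover $A\mathcal{L}_{\lambda,\mu}\mathbf{v}_\delta-\delta\mathcal{L}_{\lambda,\mu}\bm{\psi}_\delta=\mathbf{f}$ and hence the coupled system \eqref{eq:couple1}--\eqref{eq:couple2}. The only (harmless) deviation is in part (3), where you derive $\int\mathbf{f}\cdot\bm{\psi}_\delta=2\mathbf{E}_\delta(\mathbf{u}_\delta)$ by taking imaginary parts of the weak formulation, whereas the paper computes the difference $\mathbf{E}_\delta(\mathbf{u}_\delta)-\mathbf{J}_\delta(\mathbf{v}_\delta,\bm{\psi}_\delta)$ directly using Green's formula, the constraint, and the Euler--Lagrange relation; both calculations yield \eqref{eq:dualp2}.
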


\begin{proof}
We shall follow similar arguments to those for the proof of Lemma~\ref{lem:primal}. The existence and uniqueness can be proved similarly to that of the primal variational problem.

Next we prove that $\bu_{\delta}:=\bv_{\delta} + \mathrm{i}\bm{\psi}_{\delta}$ is the solution of the original problem (\ref{eq:lame1}). As a maximizer of $J_{\delta}$, the pair $(\bv_{\delta},\bm{\psi}_{\delta})$ must verify the Euler-lagrange equation,
\begin{equation}\label{eq:el2}
\partial_{\tau} \mathbf{J}_{\delta}(\bv_{\delta}+\tau \tilde{\bv} ,\bm{\psi}_{\delta}+\tau \tilde{\bm{\psi}})|_{\tau=0}=0
\end{equation}
for every pair $(\tilde{\bv},\tilde{\bm{\psi}})$ satisfying
\begin{equation}\label{eq:dualp3}
A\cl_{\lambda,\mu}\tilde{\bm{\psi}} + \delta \cl_{\lambda,\mu}\tilde{\bv}=0.
\end{equation}
For the energy functional $\mathbf{J}_{\delta}$, this equation is equivalent to
\begin{equation}\label{eq:el3}
\int_{\mathbb{R}^N} \bff \cdot \tilde{\bm{\psi}} -\delta \mathbf{P}_{\lambda,\mu}(\bv_{\delta},\tilde{\bv})-\delta \mathbf{P}_{\lambda,\mu}(\bm{\psi}_{\delta},\tilde{\bm{\psi}})=0.
\end{equation}
Using Green's formula and (\ref{eq:dualp3}), together with straightforward calculations, one has from \eqref{eq:el3} that
\begin{equation}\label{eq:el4}
\begin{split}
  & \int_{\mathbb{R}^N} \bff \cdot \tilde{\bm{\psi}}\ dV(x) + \delta \int_{\mathbb{R}^N}  \bv_{\delta} \cdot \cl_{\lambda,\mu} \tilde{\bv}\ dV(x) + \delta \int_{\mathbb{R}^N} \bm{\psi}_{\delta} \cdot  \cl_{\lambda,\mu} \tilde{\bm{\psi}}\ dV(x) \\
 = & \int_{\mathbb{R}^N} \bff \cdot \tilde{\bm{\psi}}\ dV(x) - \int_{\mathbb{R}^N} \bv_{\delta} \cdot A\cl_{\lambda,\mu}\tilde{\bm{\psi}}\ dV(x) + \delta \int_{\mathbb{R}^N} \cl_{\lambda,\mu} \bm{\psi}_{\delta} \cdot  \tilde{\bm{\psi}}\ dV(x) \\
 = &  \int_{\mathbb{R}^N} \bff \cdot \tilde{\bm{\psi}}\ dV(x)- \int_{\mathbb{R}^N} A\cl_{\lambda,\mu} \bv_{\delta} \cdot \tilde{\bm{\psi}}\ dV(x) + \delta \int_{\mathbb{R}^N} \cl_{\lambda,\mu} \bm{\psi}_{\delta} \cdot  \tilde{\bm{\psi}}\ dV(x) \\
 = &  \int_{\mathbb{R}^N}(\bff - A\cl_{\lambda,\mu} \bv_{\delta} + \delta \cl_{\lambda,\mu} \bm{\psi}_{\delta} ) \cdot  \tilde{\bm{\psi}}\ dV(x) = 0.
\end{split}
\end{equation}
By \eqref{eq:el4}, we conclude that the pair $(\bv_{\delta},\bw_{\delta}):= (\bv_{\delta}, \delta \bm{\psi}_{\delta})$ is a weak solution of (\ref{eq:couple1})-(\ref{eq:couple2}), and thus $\bu_{\delta}:=\bv_{\delta} + i\bm{\psi}_{\delta}$ is a solution to the original problem (\ref{eq:lame1}).

Finally, we verify that $\mathbf{E}_{\delta}(\bu_{\delta})=\mathbf{J}_{\delta}(\bv_{\delta},\bm{\psi}_{\delta})$. By using Green's formula again, we have
\begin{equation}
\begin{split}
  & \mathbf{E}_{\delta}(\bu_{\delta}) - \mathbf{J}_{\delta}(\bv_{\delta},\bm{\psi}_{\delta})\\
  =  &  \frac{\delta}{2}\mathbf{P}_{\lambda,\mu}(\bv_{\delta},\bv_{\delta})+\frac{\delta}{2}\mathbf{P}_{\lambda,\mu}(\bm{\psi}_{\delta},\bm{\psi}_{\delta}) - \int_{\mathbb{R}^N} \bff \cdot \bm{\psi}_{\delta} + \frac{\delta}{2}\mathbf{P}_{\lambda,\mu}(\bv_{\delta},\bv_{\delta})+\frac{\delta}{2}\mathbf{P}_{\lambda,\mu}(\bm{\psi}_{\delta},\bm{\psi}_{\delta}) \\
  =  &  \delta \mathbf{P}_{\lambda,\mu}(\bv_{\delta},\bv_{\delta})+\delta \mathbf{P}_{\lambda,\mu}(\bm{\psi}_{\delta},\bm{\psi}_{\delta}) - \int_{\mathbb{R}^N} \bff \cdot \bm{\psi}_{\delta} \\
  =  &  -\delta \int_{\mathbb{R}^N} \bv_{\delta} \cdot \cl_{\lambda,\mu} \bv_{\delta} -\delta \int_{\mathbb{R}^N} \bm{\psi}_{\delta} \cdot \cl_{\lambda,\mu} \bm{\psi}_{\delta} - \int_{\mathbb{R}^N} \bff \cdot \bm{\psi}_{\delta}\\
  =  &  \int_{\mathbb{R}^N} \bv_{\delta} \cdot A\cl_{\lambda,\mu} \bm{\psi}_{\delta} -\delta \int_{\mathbb{R}^N} \cl_{\lambda,\mu} \bm{\psi}_{\delta} \cdot  \bm{\psi}_{\delta} - \int_{\mathbb{R}^N} \bff \cdot \bm{\psi}_{\delta}\\
  =  &   \int_{\mathbb{R}^N}( A\cl_{\lambda,\mu} \bv_{\delta} - \delta \cl_{\lambda,\mu} \bm{\psi}_{\delta} -\bff ) \cdot \bm{\psi}_{\delta}\ dV(x) =0.
\end{split}
\end{equation}

The proof is complete.
\end{proof}

\begin{rem}\label{rem:dual}
By Lemma~\ref{lem:dual}, one can readily see that for the solution $\mathbf{u}_\delta$ to \eqref{eq:lame1} and the energy $\mathbf{E}_\delta(\mathbf{u}_\delta)$ in \eqref{eq:energy3}, one has that
\begin{equation}\label{eq:dualp4}
\mathbf{E}_\delta(\mathbf{u}_\delta)\geq \mathbf{J}_\delta(\mathbf{v},\bm{\psi}),
\end{equation}
for the energy functional $\mathbf{J}_\delta$ defined in \eqref{eq:ef2} and every pair $(\bv,\bm{\psi})$ verifying the constraint specified in \eqref{eq:dual}. For the elastic configuration $(\mathbf{C}_{\widetilde{\lambda},\widetilde{\mu}},\mathbf{f})$, we shall make use of the dual variational principle via \eqref{eq:dualp4} to show the resonance result by constructing suitable test functions $\mathbf{v}$ and $\bm{\psi}$.
\end{rem}

\section{Anomalous localized resonance for the elastostatic system}

In this section, we are in a position to present the ALR results for the Lam\'e system \eqref{eq:lame1} with the elastic configuration $(\mathbf{C}_{\widetilde\lambda,\widetilde\mu}, \mathbf{f})$ in two dimensions. Henceforth, we assume that the force term $f(x)$ is of the following form
\begin{equation}\label{eq:force1}
\mathbf{f}=\mathbf{F}\mathcal{H}^1 \lfloor \partial B_q,\quad \mathbf{F}: \partial B_q\rightarrow \mathbb{R}^2,\quad \mathbf{F}\in L^2(\partial B_q)^2,\ \ q\in\mathbb{R}_+,
\end{equation}
and
\begin{equation}\label{eq:force2}
\int_{\partial B_q} \mathbf{F}\ d \mathcal{H}^1=0.
\end{equation}
Moreover, we let the exterior domain $\Omega$ for the plasmonic structure \eqref{eq:tensor2} be taken to be $B_R$ with a fixed $R\in\mathbb{R}_+$.

\subsection{Perfect plasmone elastic waves}

As discussed earlier in Remarks \ref{rem:primal} and \ref{rem:dual}, in order to show the resonance and non-resonance results by using the variational principles, we shall construct suitable trial functions. Those functions are referred to as {\it perfect plasmone elastic waves}, and are contained in the following lemma. Starting from now on and throughout the rest of the paper, we make use of the polar coordinates $x=(r\cos\theta, r\sin\theta)\in\mathbb{R}^2$.

\begin{lem}\label{lem:perfectwaves}
Consider the PDE for a function $\bm{\psi}: \mathbb{R}^2 \rightarrow \mathbb{R}^2$
\begin{equation}\label{eq:trial1}
\begin{split}
  & A\cl_{\lambda,\mu}\bm{\psi}=0, \\
  & \bm{\psi}(x)= \mathcal{O}(\|x\|^{-1}) \quad \mbox{as} \quad \|x\| \rightarrow \infty,
\end{split}
\end{equation}
where
\begin{equation}\label{eq:A2}
   A(x) = \begin{cases}
   c,\quad & |x| \leq R,\\
   +1,\quad & |x| >R,
   \end{cases}
\end{equation}
and $c$ is given in (\ref{eq:tensor3}). Then there exist non-trivial solutions $\bm{\psi} = \widehat{\bm{\psi}}_k$, $k=1,2,\ldots$, which are given as follows:
\begin{equation}\label{eq:solution1}
\widehat{\bm{\psi}}_k(x):=\begin{cases}
& R^{2k} \left[
  \begin{array}{c}
   r^{-k} \cos(k \theta) + k \alpha(r^2-R^2) \frac{1} {r^{k+2}} \cos((k+2) \theta)  \\
   -r^{-k} \sin(k \theta) + k \alpha(r^2-R^2) \frac{1} {r^{k+2}} \sin((k+2) \theta)   \\
  \end{array}
\right], r>R;\\
& \left[
  \begin{array}{c}
     r^k \cos(k \theta)   \\
    -r^k \sin(k \theta)  \\
  \end{array}
\right],
\quad \quad r<R ;
\end{cases}
\end{equation}
%\begin{equation}\label{eq:solution2}
% \left[
%  \begin{array}{c}
%     r^k \cos(k \theta)   \\
%    -r^k \sin(k \theta)  \\
%  \end{array}
%\right]
%\quad \quad r<R ;
%\end{equation}
%here
where
\begin{equation}\label{coeff_alpha}
  \alpha = \alpha_2 / \alpha_1,
\end{equation}
 and
\begin{equation}\label{coeff_alpha_1}
  \alpha_1=\frac{1}{2}\left( \frac{1}{\mu} + \frac{1}{2 \mu + \lambda}  \right),
\end{equation}
\begin{equation}\label{coeff_alpha_2}
  \alpha_2=\frac{1}{2}\left( \frac{1}{\mu} - \frac{1}{2 \mu + \lambda}  \right).
\end{equation}
Moreover, by straightforward calculations, one has that that the energy $P(\widehat{\bm{\psi}}_k,\widehat{\bm{\psi}}_k)$ is
\begin{equation}
 \mathbf{P}_{\lambda,\mu}(\widehat{\bm{\psi}}_k,\widehat{\bm{\psi}}_k)= 8k\pi \frac{\mu(\lambda + 2 \mu)}{\lambda + 3 \mu} R^{2k} .
\end{equation}
\end{lem}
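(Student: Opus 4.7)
The plan is to work directly from the explicit ansatz \eqref{eq:solution1}: check that $\mathcal{L}_{\lambda,\mu}\widehat{\bm{\psi}}_k=0$ pointwise in each region $r<R$ and $r>R$, verify the appropriate transmission conditions across $r=R$ (which for the equation $A\mathcal{L}_{\lambda,\mu}\bm{\psi}=0$ with $A$ jumping from $c$ inside to $1$ outside amount to continuity of $\bm{\psi}$ together with continuity of $A\,\partial\bm{\psi}/\partial\bm{\nu}$, i.e.\ $\partial\widehat{\bm{\psi}}_k^{\,+}/\partial\bm{\nu}=c\,\partial\widehat{\bm{\psi}}_k^{\,-}/\partial\bm{\nu}$), confirm the decay $\widehat{\bm{\psi}}_k=\mathcal{O}(\|x\|^{-1})$ at infinity, and finally evaluate $\mathbf{P}_{\lambda,\mu}(\widehat{\bm{\psi}}_k,\widehat{\bm{\psi}}_k)$ by Green's identity \eqref{eq:greenformula}.

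For the interior field, writing $z=x_1+\mathrm{i}x_2$ one has $\widehat{\bm{\psi}}_k^{\,-}=(\operatorname{Re}\bar z^k,\operatorname{Im}\bar z^k)$. Since $\bar z^k$ is antiholomorphic, a short Wirtinger-derivative calculation gives $\nabla\cdot\widehat{\bm{\psi}}_k^{\,-}=0$ and $\Delta\widehat{\bm{\psi}}_k^{\,-}=0$, hence $\mathcal{L}_{\lambda,\mu}\widehat{\bm{\psi}}_k^{\,-}=0$. For the exterior field set $\bm{\phi}_2:=r^{-k-2}(\cos(k+2)\theta,\sin(k+2)\theta)$. The leading piece $R^{2k}(\operatorname{Re}z^{-k},\operatorname{Im}z^{-k})$ has harmonic components but nonvanishing divergence $-2kR^{2k}r^{-k-1}\cos((k+1)\theta)$, so it is not by itself annihilated by $\mathcal{L}_{\lambda,\mu}$; the correction $k\alpha(r^2-R^2)\bm{\phi}_2$ is introduced precisely to cancel the surviving gradient-of-divergence. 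Using that $\bm{\phi}_2$ is antiholomorphic and homogeneous of degree $-k-2$, one computes $\Delta((r^2-R^2)\bm{\phi}_2)=-4(k+1)\bm{\phi}_2$, $\nabla\cdot\widehat{\bm{\psi}}_k^{\,+}=2kR^{2k}(\alpha-1)r^{-k-1}\cos((k+1)\theta)$, and $\nabla(\nabla\cdot\widehat{\bm{\psi}}_k^{\,+})=-2k(k+1)R^{2k}(\alpha-1)\bm{\phi}_2$. Combining,
\[
\mathcal{L}_{\lambda,\mu}\widehat{\bm{\psi}}_k^{\,+}=-2k(k+1)R^{2k}\bigl[(\lambda+3\mu)\alpha-(\lambda+\mu)\bigr]\bm{\phi}_2,
\]
which vanishes exactly when $\alpha=(\lambda+\mu)/(\lambda+3\mu)$; a short algebraic check confirms that $\alpha_2/\alpha_1$ from \eqref{coeff_alpha_1}--\eqref{coeff_alpha_2} equals this value (and coincides with $-c$).

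Continuity of $\widehat{\bm{\psi}}_k$ across $r=R$ is immediate because the $(r^2-R^2)$ factor kills the correction and both sides reduce to $R^k(\cos k\theta,-\sin k\theta)$. For the conormal jump, the interior side is easy: $\nabla\widehat{\bm{\psi}}_k^{\,-}$ is already symmetric with zero trace, yielding $\partial\widehat{\bm{\psi}}_k^{\,-}/\partial\bm{\nu}|_{r=R}=2\mu kR^{k-1}(\cos k\theta,-\sin k\theta)$. The exterior side is the delicate step: although the correction vanishes on $r=R$, it still contributes to $\nabla\widehat{\bm{\psi}}_k^{\,+}$ through $\nabla((r^2-R^2)\bm{\phi}_2)|_{r=R}=2\bm{\phi}_2\otimes\bm{x}$. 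Expanding $\partial\widehat{\bm{\psi}}_k^{\,+}/\partial\bm{\nu}=\lambda(\nabla\cdot\widehat{\bm{\psi}}_k^{\,+})\bm{\nu}+2\mu(\nabla^s\widehat{\bm{\psi}}_k^{\,+})\bm{\nu}$ and using product-to-sum identities to decompose all angular factors into combinations of $(\cos(k+2)\theta,\sin(k+2)\theta)$ and $(\cos k\theta,-\sin k\theta)$, the $(k+2)$-th-harmonic contribution carries the coefficient $k[(\lambda+3\mu)\alpha-(\lambda+\mu)]=0$, while the $k$-th-harmonic contribution simplifies to $k(\alpha-1)(\lambda+\mu)R^{k-1}=-2\mu k(\lambda+\mu)R^{k-1}/(\lambda+3\mu)$, which is exactly $c\cdot 2\mu kR^{k-1}$, delivering the required transmission identity. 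The decay $\widehat{\bm{\psi}}_k^{\,+}=\mathcal{O}(r^{-k})$ is then obvious.

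Finally, apply Green's identity \eqref{eq:greenformula} separately in $B_R$ and in $\mathbb{R}^2\setminus\overline{B_R}$. Using $\mathcal{L}_{\lambda,\mu}\widehat{\bm{\psi}}_k=0$ in each region, the vanishing of the surface term at infinity by the $r^{-k}$ decay, and the transmission conditions just established, only the $\partial B_R$ boundary contributions survive, giving
\[
\mathbf{P}_{\lambda,\mu}(\widehat{\bm{\psi}}_k,\widehat{\bm{\psi}}_k)=(1-c)\int_{\partial B_R}\widehat{\bm{\psi}}_k^{\,-}\cdot\frac{\partial\widehat{\bm{\psi}}_k^{\,-}}{\partial\bm{\nu}}\,ds.
\]
A pointwise evaluation yields $\widehat{\bm{\psi}}_k^{\,-}\cdot(\partial\widehat{\bm{\psi}}_k^{\,-}/\partial\bm{\nu})|_{r=R}=2\mu kR^{2k-1}$, so the boundary integral is $4\pi\mu kR^{2k}$; combined with $1-c=2(\lambda+2\mu)/(\lambda+3\mu)$ this gives the asserted $8\pi k\mu(\lambda+2\mu)R^{2k}/(\lambda+3\mu)$. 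The main obstacle in the whole argument is the exterior conormal computation: one must track how the vanishing-at-$r=R$ correction still contributes through its gradient, and verify that the spurious $(k+2)$-th-harmonic pieces of the conormal derivative cancel via precisely the same algebraic identity $(\lambda+3\mu)\alpha=(\lambda+\mu)$ that already made $\mathcal{L}_{\lambda,\mu}\widehat{\bm{\psi}}_k^{\,+}=0$.
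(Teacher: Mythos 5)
Your proof is correct, and it coincides with the paper's approach: the paper's entire proof of Lemma~\ref{lem:perfectwaves} is the single sentence that the lemma ``can be verified by straightforward computations,'' and your argument simply carries those computations out (interior/exterior verification of $\mathcal{L}_{\lambda,\mu}\widehat{\bm{\psi}}_k=0$, the transmission conditions $\widehat{\bm{\psi}}_k^+=\widehat{\bm{\psi}}_k^-$ and $\partial\widehat{\bm{\psi}}_k^+/\partial\bm{\nu}=c\,\partial\widehat{\bm{\psi}}_k^-/\partial\bm{\nu}$ on $r=R$, and the energy via Green's identity). I checked the key identities --- $(\lambda+3\mu)\alpha=(\lambda+\mu)$ forcing $\alpha=\alpha_2/\alpha_1=-c$, the cancellation of the $(k+2)$-harmonic in the exterior conormal derivative, and the final value $(1-c)\cdot 4\pi\mu kR^{2k}=8\pi k\mu(\lambda+2\mu)R^{2k}/(\lambda+3\mu)$ --- and they all hold.
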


\begin{proof}
The lemma can be verified by straightforward computations.
\end{proof}

\begin{rem}
For the subsequent use, we remark that if
\begin{equation}\label{eq:A3}
   A(x) = \begin{cases}
   +1,\quad & |x| \leq R,\\
   c,\quad & |x| >R ,
   \end{cases}
\end{equation}
then by straightforward calculations, one can verify that the perfect plasmone elastic waves $\widehat{\bm{\psi}}_k(x)$, $k=2,3,\ldots,$ are given by:
\begin{equation}\label{eq:solution56}
\widehat{\bm{\psi}}_k(x):=\begin{cases}
\left[
  \begin{array}{c}
    R^{2k} r^{-k} \cos(k \theta) \\
   R^{2k} r^{-k} \sin(k \theta) \\
  \end{array}
\right]
\quad & r>R;\\
\left[
  \begin{array}{c}
     r^k \cos(k \theta) - k \alpha (r^2-R^2)  r^{k-2} \cos((k-2) \theta)  \\
     r^k \sin(k \theta) + k \alpha (r^2-R^2)  r^{k-2} \sin((k-2) \theta) \\
  \end{array}
\right]
\quad & r<R.
\end{cases}
\end{equation}

\end{rem}

\smallskip

Next we give the Fourier series expression of the force term $\bff$. Suppose that the source $\bff$ is real-valued and supported at distance $q$ from origin, given in \eqref{eq:force1} and \eqref{eq:force2}, then it can be represented as follows
\begin{equation}\label{eq:source22}
  \bff = \sum_{k=1}^{\infty} (\beta_k \bff_{1,k}^q + \gamma_k \bff_{2,k}^q+\xi_k \bff_{3,k}^q+\eta_k\bff_{4,k}^q)
\end{equation}
where
\begin{align}
\bff_{1,k}^q =&
\left[
  \begin{array}{c}
     \cos(k \theta)  \\
     \sin(k \theta)  \\
  \end{array}
\right]
 \mathcal{H}^1 \lfloor \partial B_q ,\label{eq:fkq1}\\
\bff_{2,k}^q =&
\left[
  \begin{array}{c}
     \cos(k \theta)  \\
     -\sin(k \theta)  \\
  \end{array}
\right]
 \mathcal{H}^1 \lfloor \partial B_q,\label{eq:fkq2}\\
  \bff_{3,k}^q =&
\left[
  \begin{array}{c}
     -\sin(k \theta)  \\
     \cos(k \theta)  \\
  \end{array}
\right]
 \mathcal{H}^1 \lfloor \partial B_q ,\label{eq:fkq3}\\
\bff_{4,k}^q =&
\left[
  \begin{array}{c}
     \sin(k \theta)  \\
     \cos(k \theta)  \\
  \end{array}
\right]
 \mathcal{H}^1 \lfloor \partial B_q,\label{eq:fkq4}
\end{align}
and
\begin{align}
& \beta_k= \int_{\partial B_q} \bff \cdot \bff_{1,k}^q\ ds, \quad  \gamma_k = \int_{\partial B_q} \bff \cdot \bff_{2,k}^q\ ds,\\
 & \xi_k= \int_{\partial B_q} \bff \cdot \bff_{3,k}^q\ ds, \quad  \eta_k = \int_{\partial B_q} \bff \cdot \bff_{4,k}^q\ ds.
\end{align}

In order to simplify the exposition, in our subsequent study, we shall always assume that $\xi_k=\eta_k\equiv 0$. That is, we exclude the presence of the modes $\bff_{3,k}$ and $\bff_{4,k}$ in the force term, and hence instead of the general form \eqref{eq:source22}, we shall consider a force term of the following form
\begin{equation}\label{eq:source2}
  \bff = \sum_{k=1}^{\infty} (\beta_k \bff_{1,k}^q + \gamma_k \bff_{2,k}^q).
\end{equation}
However, it is emphasized that all of the resonance and non-resonance results in the present paper still hold with the presence of the modes $\bff_{3,k}$ and $\bff_{4,k}$, by following completely similar arguments with necessary modifications.

\subsection{Resonance with no core}

We first consider the case that there is no core, namely $\Sigma=\emptyset$, in the plasmonic structure \eqref{eq:tensor1}--\eqref{eq:tensor2}. In this case, we have that the elastic configuration $(\mathbf{C}_{\widetilde\lambda,\widetilde\mu},\bff)$ is always resonant in the sense that

\begin{thm}\label{thm:nocore}
Consider the elastic configuration $(\mathbf{C}_{\widetilde\lambda,\widetilde\mu},\bff)$, where $\mathbf{C}_{\widetilde\lambda,\widetilde\mu}$ is described in \eqref{eq:tensor1}--\eqref{eq:tensor2} with $c$ given in \eqref{eq:tensor3} and $\Omega=B_R$ for a certain $R\in\mathbb{R}_+$. Let $\bff$ be given by (\ref{eq:source2}), with $\gamma_k \neq 0$ for some $k\in\mathbb{N}$, representing the force supported at a distance $q>R$. Assume that there is no core; that is, $\Sigma=\emptyset$. Then the configuration is resonant, i.e. $\mathbf{E}_{\delta}(\mathbf{C}_{\widetilde\lambda,\widetilde\mu},\bff) \rightarrow +\infty$ as $\delta \rightarrow +0$.
\end{thm}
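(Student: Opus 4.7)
The plan is to invoke the dual variational lower bound of Lemma 2.2 (in the form of Remark 2.2) with a test pair $(\mathbf{v}, \bm{\psi})$ built directly from the perfect plasmone elastic waves $\widehat{\bm{\psi}}_k$ of Lemma 3.1. Because $\Sigma = \emptyset$, the coefficient $A(x)$ in the plasmonic structure \eqref{eq:tensor1}--\eqref{eq:tensor2} reduces exactly to the two-valued function \eqref{eq:A2}, so every $\widehat{\bm{\psi}}_k$ automatically satisfies $A\mathcal{L}_{\lambda,\mu}\widehat{\bm{\psi}}_k = 0$ on all of $\mathbb{R}^2$. Setting $\mathbf{v}\equiv 0$ and $\bm{\psi}:=t\,\widehat{\bm{\psi}}_k$ for a scalar $t\in\mathbb{R}$ therefore fulfills the dual PDE constraint $A\mathcal{L}_{\lambda,\mu}\bm{\psi}+\delta\mathcal{L}_{\lambda,\mu}\mathbf{v}=0$; membership of $\widehat{\bm{\psi}}_k$ in $\mathcal{S}$ follows from the $O(r^{-k-1})$ decay of its gradient at infinity, modulo a harmless constant shift enforcing the zero-mean normalization.

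Substitution into the definition \eqref{eq:ef2} of $\mathbf{J}_\delta$ produces the scalar quadratic
\[
\mathbf{J}_\delta(0, t\widehat{\bm{\psi}}_k) = t \int_{\partial B_q} \bff \cdot \widehat{\bm{\psi}}_k \, ds - \frac{\delta t^2}{2}\,\mathbf{P}_{\lambda,\mu}(\widehat{\bm{\psi}}_k, \widehat{\bm{\psi}}_k),
\]
whose supremum in $t$ is $\bigl(\int_{\partial B_q}\bff\cdot\widehat{\bm{\psi}}_k\,ds\bigr)^{2}/\bigl(2\delta\,\mathbf{P}_{\lambda,\mu}(\widehat{\bm{\psi}}_k,\widehat{\bm{\psi}}_k)\bigr)$. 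Combining with \eqref{eq:dualp4} and the closed form $\mathbf{P}_{\lambda,\mu}(\widehat{\bm{\psi}}_k,\widehat{\bm{\psi}}_k)=8\pi k R^{2k}\mu(\lambda+2\mu)/(\lambda+3\mu)$ supplied by Lemma 3.1 yields
\[
\mathbf{E}_\delta(\bu_\delta) \;\geq\; \frac{\lambda+3\mu}{16\pi k R^{2k}\mu(\lambda+2\mu)}\cdot\frac{1}{\delta}\Bigl|\int_{\partial B_q} \bff \cdot \widehat{\bm{\psi}}_k \, ds\Bigr|^{2}.
\]

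What remains is to show that the coupling $\int_{\partial B_q}\bff\cdot\widehat{\bm{\psi}}_k\,ds$ is strictly nonzero for some $k$ with $\gamma_k\neq 0$. On the circle $r=q>R$ the exterior formula \eqref{eq:solution1} for $\widehat{\bm{\psi}}_k$ is a trigonometric polynomial in $\cos(k\theta), \sin(k\theta), \cos((k+2)\theta), \sin((k+2)\theta)$ with explicit amplitudes $R^{2k}q^{-k}$ (leading) and $R^{2k}k\alpha(q^{2}-R^{2})q^{-k-2}$ (subleading). Plugging in the Fourier expansion \eqref{eq:source2} of $\bff$ and invoking the $L^{2}$-orthogonality of the trigonometric system on $[0,2\pi]$, all cross terms drop and a direct computation leaves only the two surviving couplings,
\[
\int_{\partial B_q} \bff\cdot\widehat{\bm{\psi}}_k\,ds \;=\; 2\pi R^{2k}\!\left[\gamma_k\,q^{1-k} \;+\; \beta_{k+2}\,\frac{k\alpha(q^{2}-R^{2})}{q^{k+1}}\right].
\]
Selecting $k$ to be an index with $\gamma_k\neq 0$ the dominant term is nonzero; should the specific index produce an exact cancellation between the two contributions, testing instead against a finite superposition $\sum_j t_j\widehat{\bm{\psi}}_j$ (which is separable in $(t_j)$ thanks to the angular orthogonality $\mathbf{P}_{\lambda,\mu}(\widehat{\bm{\psi}}_j,\widehat{\bm{\psi}}_{j'})=0$ for $j\neq j'$) lets one pick a neighbouring index that breaks the cancellation. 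Either way the numerator is bounded below by a constant $C=C(\bff,q,R,\lambda,\mu)>0$ independent of $\delta$, so $\mathbf{E}_\delta(\bu_\delta)\geq C/\delta\to+\infty$ as $\delta\to+0$.

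The principal obstacle is the Fourier-orthogonality bookkeeping of the coupling integral and the accompanying verification that the bracketed quantity is genuinely nonzero, since the plasmon family $\{\widehat{\bm{\psi}}_k\}$ only detects the pairings $(\gamma_k,\beta_{k+2})$ rather than each mode individually. Everything else---the admissibility check for $\widehat{\bm{\psi}}_k\in\mathcal{S}$, the explicit scalar optimization in $t$, and the substitution of the closed-form energy from Lemma 3.1---is a direct consequence of the formulas already at hand.
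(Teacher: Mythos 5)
Your proposal is essentially the paper's own proof: both invoke the dual principle with $\mathbf{v}\equiv 0$ and $\bm{\psi}$ a scalar multiple of the perfect plasmon wave $\widehat{\bm{\psi}}_k$, and both reduce the matter to a scalar quadratic in the amplitude (the paper sends $\tau_\delta\to\infty$ with $\delta\tau_\delta^2\to 0$ rather than optimizing exactly, but this is cosmetic). One point where you are actually more careful than the paper: the paper evaluates $\int_{\partial B_q}\bff\cdot\widehat{\bm{\psi}}_k$ as $2\pi q\,\gamma_k q^{-k}R^{2k}$ only, silently dropping the coupling of the $\cos((k+2)\theta)$, $\sin((k+2)\theta)$ component of $\widehat{\bm{\psi}}_k\big|_{r=q}$ with the mode $\bff_{1,k+2}^q$; your bracket $\gamma_k q^{1-k}+\beta_{k+2}\,k\alpha(q^2-R^2)q^{-k-1}$ is the correct value. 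However, your patch for the degenerate case where this bracket vanishes does not work as stated: the family $\{\widehat{\bm{\psi}}_j\}$ only detects the decoupled pairings $(\gamma_j,\beta_{j+2})$, so if the single pairing containing the nonzero $\gamma_k$ cancels, no ``neighbouring index'' rescues the bound --- a source such as $\gamma_k\bff_{2,k}^q+\beta_{k+2}\bff_{1,k+2}^q$ with $\beta_{k+2}=-\gamma_k q^2/\bigl(k\alpha(q^2-R^2)\bigr)$ defeats every test function in this family. This exceptional case is equally unaddressed (indeed, unacknowledged) in the paper, so your write-up matches the published argument in the generic situation and is more honest about its limitation.
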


\begin{proof}
We shall make use the dual variational principle for its proof. Fix the radii $R, q$ and consider an arbitrary sequence $\delta=\delta_j \rightarrow +0$ as $j\rightarrow+\infty$. Our aim is to find a sequence $(\bv_{\delta},\bm{\psi}_{\delta})$, satisfying the constraint $A\cl_{\lambda,\mu} \bm{\psi}_{\delta} + \delta \cl_{\lambda,\mu}\bv_{\delta}=0$ of \eqref{eq:dual} and such that $\mathbf{J}_{\delta}(\bv_{\delta},\bm{\psi}_{\delta})\rightarrow +\infty $. We choose
\begin{equation}
  \bv_{\delta} \equiv 0
\end{equation}
\begin{equation}
  \bm{\psi}_{\delta} : \equiv \tau_{\delta} \widehat{\bm{\psi}}_k,
\end{equation}
where $\widehat{\bm{\psi}}_k$ is given by (\ref{eq:solution1})--\eqref{coeff_alpha_2} and $\tau_{\delta} \in \mathbb{R}$ is to be chosen below. Thus the pair $(\bv_{\delta},\bm{\psi}_{\delta})$ satisfies the PDE constraint in (\ref{eq:dual}). With the help of (\ref{eq:dualp4}), the definition of $\mathbf{J}_{\delta}$, the orthogonality of Fourier series and $\gamma_k \neq 0$ for some $k\in\mathbb{N}$, we can obtain
\begin{equation}
  \begin{split}
    \mathbf{E}_{\delta}(\bu_{\delta}) & \geq  \mathbf{J}_{\delta}(\bv_{\delta},\bm{\psi}_{\delta}) = \mathbf{J}_{\delta}( 0 ,\bm{\psi}_{\delta}) =\int \bff \cdot \bm{\psi}_{\delta} -\frac{\delta}{2}\mathbf{P}_{\lambda,\mu}(\bm{\psi}_{\delta},\bm{\psi}_{\delta}) \\
      & = \int_{\partial B_q} \gamma_k \tau_{\delta} q^{-k} R^{2k}(\cos^2 (k \theta) + \sin^2 (k \theta)) -\frac{\delta}{2} |\tau_{\delta}|^2 \mathbf{P}_{\lambda,\mu}(\widehat{\bm{\psi}}_k,\widehat{\bm{\psi}}_k) \\
      & = 2\pi q  \gamma_k \tau_{\delta} q^{-k} R^{2k} - (\delta |\tau_{\delta}|^2 ) 4k\pi \frac{\mu(\lambda + 2 \mu)}{\lambda + 3 \mu} R^{2k}.
  \end{split}
\end{equation}
Choosing $\tau_{\delta} \rightarrow +\infty$ with $\delta |\tau_{\delta}|^2 \rightarrow +0$ as $\delta\rightarrow+0$, we obtain $ \mathbf{E}_{\delta}(\bu_{\delta})\rightarrow +\infty  $ for $\delta \rightarrow +0$.

The proof is complete.
\end{proof}

\begin{rem}\label{rem:modification1}
In Theorem~\ref{thm:nocore}, we assume $\gamma_k\neq 0$ for some $k\in\mathbb{N}$; that is, there is at least a mode $\bff_{2,k}$ presented in the force term $\bff$. If we assume that in the force term \eqref{eq:source2}, there is a coefficient $\beta_k \neq 0$ for some $k \geq 2$, then by following a completely similar argument, together with the modification of the plasmone constant $c$ in (\ref{eq:tensor2}) to be
  \begin{equation}\label{eq:mc1}
  c:= -\frac{\lambda+3\mu}{\lambda+\mu},
  \end{equation}
 one can draw a similar conclusion to Theorem~\ref{thm:nocore} about the resonance.
\end{rem}

\subsection{ALR with a core of an arbitrary shape}\label{sect:ALR1}

In this section, we consider a non-radial geometry with a core, $\Sigma \subset B_1$, of an arbitrary shape. We shall show that anomalous localized resonance occurs; that is, the resonance of the configuration depends on the location of the force term $\bff$.

\begin{thm}\label{thm:main2}
Consider the elastic configuration $(\mathbf{C}_{\widetilde\lambda,\widetilde\mu},\bff)$, where $\mathbf{C}_{\widetilde\lambda,\widetilde\mu}$ is described in \eqref{eq:tensor1}--\eqref{eq:tensor2} with $c$ given in \eqref{eq:tensor3} and, $\Omega=B_R$ for a certain $R>1$ and $\Sigma\subset B_1$ with a connected Lipschitz boundary $\partial \Sigma$.
 Consider the elastostatic system (\ref{eq:lame1}), with $\mathbf{C}_{\widetilde\lambda,\widetilde\mu}$ described above.  Then for every radius $R < q < R^*:=R^{3/2}$, there exists a source $\bff$ of the form \eqref{eq:source2} supported at a distance $q$ from the origin, such that the configuration $(\mathbf{C}_{\widetilde\lambda,\widetilde\mu},\bff)$ is resonant.
\end{thm}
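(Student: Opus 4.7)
The plan is to invoke the dual variational principle of Remark~\ref{rem:dual} and exhibit, for every sufficiently small $\delta>0$, an admissible trial pair $(\bv_\delta,\bm{\psi}_\delta)\in\mathcal{S}\times\mathcal{S}$ with $A\cl_{\lambda,\mu}\bm{\psi}_\delta + \delta\cl_{\lambda,\mu}\bv_\delta = 0$ for which $\mathbf{J}_\delta(\bv_\delta,\bm{\psi}_\delta)\to+\infty$. The source will be chosen as a geometric superposition
\[
\bff := \sum_{k=1}^{\infty} r^k\,\bff_{2,k}^q,
\]
with $r\in(0,1)$ to be selected (summability gives $\bff\in L^2(\partial B_q)^2$ with zero average). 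The trial pair will consist of a single-mode plasmone $\bm{\psi}_\delta := \tau_\delta\,\widehat{\bm{\psi}}_k$ from Lemma~\ref{lem:perfectwaves}, with an adaptive index $k=k(\delta)\to\infty$ and weight $\tau_\delta>0$ to be tuned, plus a corrector $\bv_\delta$ that restores the PDE constraint.

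The key observation is that, although $\widehat{\bm{\psi}}_k$ solves $A\cl_{\lambda,\mu}\widehat{\bm{\psi}}_k=0$ for the two-phase $A$ of Lemma~\ref{lem:perfectwaves}, inserting the core modifies $A$ only across the new interface $\partial\Sigma$ (it jumps from $c$ outside $\Sigma$ to $+1$ inside). Since $\widehat{\bm{\psi}}_k$ is smooth across $\partial\Sigma$, a direct computation identifies
\[
A\cl_{\lambda,\mu}\bm{\psi}_\delta = \tau_\delta(1-c)\,\partial_{\bm{\nu}}\widehat{\bm{\psi}}_k\big|_{\partial\Sigma}\,\mathcal{H}^1\lfloor\partial\Sigma,
\]
a surface distribution supported on $\partial\Sigma$. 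I then define $\bv_\delta$ to be the unique solution in $\mathcal{S}$ of $\cl_{\lambda,\mu}\bv_\delta = -\delta^{-1}A\cl_{\lambda,\mu}\bm{\psi}_\delta$, namely an elastic single-layer potential on $\partial\Sigma$ of density proportional to $\tau_\delta/\delta$.

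With this pair, $\mathbf{J}_\delta$ splits into a linear source term and two quadratic penalties. By orthogonality of Fourier modes and the exterior form in Lemma~\ref{lem:perfectwaves}, the source term equals $2\pi q\,\tau_\delta(rR^2/q)^k$. The $\bm{\psi}_\delta$-penalty is handled directly by Lemma~\ref{lem:perfectwaves}, giving $\tfrac{\delta}{2}\mathbf{P}_{\lambda,\mu}(\bm{\psi}_\delta,\bm{\psi}_\delta)\le C\,\delta\tau_\delta^2 kR^{2k}$. The $\bv_\delta$-penalty is the part where the core geometry enters; using Green's formula and the $H^{-1/2}(\partial\Sigma)\to H^{1/2}(\partial\Sigma)$ mapping property of the elastic single-layer operator $S_{\partial\Sigma}$,
\[
\mathbf{P}_{\lambda,\mu}(\bv_\delta,\bv_\delta) = \frac{\tau_\delta^2(1-c)^2}{\delta^2}\big\langle S_{\partial\Sigma}[\partial_{\bm{\nu}}\widehat{\bm{\psi}}_k],\,\partial_{\bm{\nu}}\widehat{\bm{\psi}}_k\big\rangle_{L^2(\partial\Sigma)} \le C\,\frac{\tau_\delta^2 k^2}{\delta^2},
\]
where the final bound uses the pointwise estimate $|\partial_{\bm{\nu}}\widehat{\bm{\psi}}_k(x)|\lesssim k$ available for $x\in\partial\Sigma\subset\overline{B_1}$ (since $|x|^{k-1}\le 1$ there). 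This is where the assumption $\Sigma\subset B_1$ is used.

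It remains to balance the three terms. Setting $\tau_\delta := \varepsilon\,\delta(rR^2/q)^k/k^2$ with $\varepsilon>0$ small makes the $\bv_\delta$-penalty strictly smaller than the source, and the source reduces to $\varepsilon\cdot 2\pi q\,\delta(rR^2/q)^{2k}/k^2$. Choosing $k=k(\delta)$ so that $(rR^2/q)^k\sim\delta^{-\gamma/2}$ for a fixed $\gamma>1$ forces the source to $\delta^{1-\gamma}/k^2\to+\infty$, while a direct power count shows that the $\bm{\psi}_\delta$-penalty is dominated by the source exactly when $\gamma<2\log(rR^2/q)/\log R$. The window $1<\gamma<2\log(rR^2/q)/\log R$ is nonempty iff $(rR^2/q)^2>R$, i.e., $r>q/R^{3/2}$; combined with the summability requirement $r<1$, both can be met simultaneously iff $q<R^{3/2}$, which is precisely how the critical radius $R^\ast=R^{3/2}$ emerges. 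The main technical obstacle is the uniform-in-$k$ bound $\mathbf{P}_{\lambda,\mu}(\bv_\delta,\bv_\delta)\lesssim(\tau_\delta k/\delta)^2$ on an arbitrary Lipschitz $\partial\Sigma\subset\overline{B_1}$, which must be carried out carefully because $\partial\Sigma$ may touch $\partial B_1$ and is only Lipschitz; it ultimately reduces to standard continuity properties of the elastic single-layer potential together with the explicit pointwise growth of $\widehat{\bm{\psi}}_k$ inside $B_R$.
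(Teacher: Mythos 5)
Your proposal is correct and follows essentially the same route as the paper: the dual variational principle with $\bm{\psi}_\delta=\tau_\delta\widehat{\bm{\psi}}_{k(\delta)}$, a corrector $\bv_\delta$ absorbing the constraint violation on $\partial\Sigma$, a penalty bound that exploits $\partial\Sigma\subset\overline{B_1}$ (the paper gets $\|A\mathcal{L}_{\lambda,\mu}\bm{\psi}_\delta\|_{H^{-1}}^2\lesssim\tau_\delta^2 k$ by duality against $H^1_0(B_1)$, your single-layer argument gives $k^2$, which is harmless since $k\sim\log(1/\delta)$), and the same balancing yielding the factor $(R^3/q^2)^{k}$ and hence the critical radius $R^{3/2}$. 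Your explicit choice $\gamma_k=r^k$ with $q/R^{3/2}<r<1$ is a concrete instance of the paper's condition that the Fourier coefficients ``decay not very quickly.''
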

\begin{proof}
 We fix $R < q < R^*$ and a sequence $\delta=\delta_j \rightarrow +0$ and consider a force term $\bff$ given by (\ref{eq:source2}). Our aim is to find a sequence $(\bv_{\delta},\bm{\psi}_{\delta})$, satisfying the PDE constraint $A\cl_{\lambda,\mu} \bm{\psi}_{\delta} + \delta \cl_{\lambda,\mu}\bv_{\delta}=0$ in (\ref{eq:dual}) and such that $\mathbf{J}_{\delta}(\bv_{\delta},\bm{\psi}_{\delta})\rightarrow +\infty $.

We choose
\begin{equation}
  \bm{\psi}_{\delta} : \equiv \tau_{\delta} \widehat{\bm{\psi}}_{k_{\delta}},
\end{equation}
where $\widehat{\bm{\psi}}_k$ is given by (\ref{eq:solution1})-(\ref{coeff_alpha_2}). The number $k=k_{\delta} \in \mathbb{N}$ and $\tau_{\delta} \in \mathbb{R}$ will be properly chosen below. For $\bm{\psi}_{\delta}$, it is apparent that $A\cl_{\lambda,\mu} \bm{\psi}_{\delta} \neq0$ along the core interface $\partial \Sigma \subset B_1$. In order to satisfy the PDE constraint we define $\bv_{\delta}$ to be the solution of $ -\delta \cl_{\lambda,\mu}\bv_{\delta} = A\cl_{\lambda,\mu} \bm{\psi}_{\delta}$. Since $- \cl_{\lambda,\mu}$ is an elliptic PDO, by the standard elliptic estimates one can arrive at the following estimate:
\begin{equation}
  \delta \mathbf{P}_{\lambda,\mu}(\bv_{\delta}, \bv_{\delta}) \leq C_{\lambda,\mu} \delta^{-1} \| A\cl_{\lambda,\mu} \bm{\psi}_{\delta} \|^2_{H^{-1}(\mathbb{R}^2)^2} \leq C_{\lambda,\mu} \delta^{-1} \tau_{\delta}^2 k_{\delta},
\end{equation}
where and also in what follows $C_{\lambda,\mu}$ denotes a generic positive constant. Indeed, one has by direct calculations that
\begin{equation}
 \mathbf{P}_{\lambda,\mu}(\bv_{\delta}, \bv_{\delta}) = \int_{\mathbb{R}^2}(\lambda |\nabla \cdot \bv_{\delta}|^2 + 2\mu  |\nabla^s \bv_{\delta}|^2 ) \leq C_{\lambda,\mu} \|  \bv_{\delta} \|^2_{H^{1}(\mathbb{R}^2)^2}.
\end{equation}
Since $ A\cl_{\lambda,\mu} \bm{\psi}_{\delta}=0$ outside $B_1$, we choose $\bmm{\omega} \in H_0^1(B_1)^2$ with $ \|\bmm{\omega}\|_{H_0^1(B_1)^2}=1 $ and we then have
\begin{equation}
\begin{split}
    & \quad \int_{\mathbb{R}^2}  \left(A\cl_{\lambda,\mu} \bm{\psi}_{\delta} \right) \cdot \bmm{\omega}  =  \int_{B_1}  \left(A\cl_{\lambda,\mu} \bm{\psi}_{\delta} \right) \cdot \bmm{\omega} \\
    & = A(\lambda +\mu)\int_{B_1} (\nabla \cdot \bm{\psi}_{\delta})(\nabla \cdot \bmm{\omega}) + A \mu \int_{B_1} \left( (\nabla \psi_{\delta}^1) \cdot (\nabla \omega_1) + (\nabla \psi_{\delta}^2) \cdot (\nabla \omega_2) \right) \\
    & \leq  A(\lambda +\mu) \| \nabla \cdot \bm{\psi}_{\delta} \|_{L^2(B_1)} + A \mu \left( \| \nabla \psi_{\delta}^1 \|_{L^2(B_1)^2} + \| \nabla \psi_{\delta}^2 \|_{L^2(B_1)^2} \right)\\
    & \leq C_{\lambda,\mu} \tau_{\delta} k_{\delta}^{1/2},
\end{split}
\end{equation}
where
\begin{equation}
  \bm{\psi}_{\delta} =
 \left[
   \begin{array}{c}
     \psi_{\delta}^1 \\
     \psi_{\delta}^2 \\
   \end{array}
 \right]
\quad \text{and} \quad
\bmm{\omega} =
 \left[
   \begin{array}{c}
     \omega_1 \\
     \omega_2 \\
   \end{array}
 \right]
\end{equation}
It remains to calculate the energy $\mathbf{J}_{\delta}(\bv_{\delta},\bm{\psi}_{\delta})$. {\color{black} We choose $k_{\delta}$ to be the smallest integer such that
\begin{equation}\label{eq:aaa1}
 R^{-k_{\delta}} < \delta.
 \end{equation}
We also note that one must have $R^{-k_{\delta} +1} \geq \delta$} since $k_\delta$ is the smallest integer fulfilling \eqref{eq:aaa1}. With the help of (\ref{eq:dualp4}), we have
\begin{equation}
  \begin{split}
    \mathbf{E}_{\delta}(\bu_{\delta}) & \geq \mathbf{J}_{\delta}(\bv_{\delta},\bm{\psi}_{\delta})= \int \bff \cdot \bm{\psi}_{\delta} - \frac{\delta}{2} \mathbf{P}_{\lambda,\mu}(\bv_{\delta},\bv_{\delta}) - \frac{\delta}{2}\mathbf{P}_{\lambda,\mu}(\bm{\psi}_{\delta},\bm{\psi}_{\delta})   \\
      & \geq c_0 \gamma_{k_{\delta}} \tau_{\delta} q^{-k_{\delta}} R^{2k_{\delta}} -C_{\lambda,\mu} \delta^{-1} \tau_{\delta}^2 k_{\delta} - C_{\lambda,\mu} \delta  \tau_{\delta}^2 k_{\delta} R^{2 k_{\delta}} \\
      & \geq \tau_{\delta} R^{k_{\delta}}  \left(  c_0 \gamma_{k_{\delta}} \left(\frac{R}{q} \right)^{k_{\delta}} - C_{\lambda,\mu} \frac{1}{(\delta R^{k_{\delta}} )} \tau_{\delta} k_{\delta} - C_{\lambda,\mu} \tau_{\delta} k_{\delta}(\delta R^{k_{\delta}})   \right).
  \end{split}
\end{equation}
The choice of $R^{k_{\delta}} < \delta$ with $1< \delta R^{k_{\delta}} \leq R$ ensures that the last two contributions are of comparable order. We then find, for some $C_{\lambda,\mu}>0$,
\begin{equation}\label{eq:dd1}
  \mathbf{E}_{\delta}(\bu_{\delta}) \geq \tau_{\delta} R^{k_{\delta}}  \left(  c_0 \gamma_{k_{\delta}} \left(\frac{R}{q} \right)^{k_{\delta}}  - C_{\lambda,\mu} \tau_{\delta} k_{\delta}  \right).
\end{equation}
We choose $\tau_{\delta}$ to be
\begin{equation}\label{eq:dd2}
  \tau_{\delta}= \frac{1}{2 C_{\lambda,\mu} k_{\delta}  } c_0 \gamma_{k_{\delta}} \left(\frac{R}{q} \right)^{k_{\delta}},
\end{equation}
and then from \eqref{eq:dd1} and \eqref{eq:dd2} we readily have that
\begin{equation}\label{eq:dd3}
  \mathbf{E}_{\delta}(\bu_{\delta}) \geq \tau_{\delta} R^{k_{\delta}} \left( \frac{1}{2 C_{\lambda,\mu} k_{\delta}  } c_0 \gamma_{k_{\delta}} \left(\frac{R}{q} \right)^{k_{\delta}}  \right) = \frac{1}{4 C_{\lambda,\mu} k_{\delta} } (c_0 \gamma_{k_{\delta}})^2 \left(\frac{R^3}{q^2} \right)^{k_{\delta}}.
\end{equation}
By the assumption, $q>R^*$ and if the sequence of the Fourier coefficients $\gamma_k$ of the force term $\mathbf{f}$ decays not very quickly (ensuring that the RHS term of \eqref{eq:dd3} goes to infinity as $\delta\rightarrow+0$), we easily see from \eqref{eq:dd3} that $\mathbf{E}_{\delta}(\bu_{\delta}) \rightarrow +\infty$ as $\delta \rightarrow +0$.

This proof is complete.
\end{proof}

\begin{rem}\label{rem:modification2}
Similar to Remark~\ref{rem:modification1}, if one chooses the plasmone constant $c$ in (\ref{eq:tensor2}) to be
\[
c:= -\frac{\lambda+3\mu}{\lambda+\mu},
\]
then by following a completely similar argument to the proof of Theorem~\ref{thm:main2}, one can show that if the Fourier coefficients $\beta_k$ of the force term $\bff$ decays not very quickly, then the anomalous localized resonance occurs.
\end{rem}

\subsection{Non-resonance in the radial case}

In Section~\ref{sect:ALR1}, we show that for certain source/force terms lying within the critical radius $R^*$, the resonance occurs. In this section, we shall show that for a certain source term lying outside the critical radius, then resonance does not occur. To that end, we would consider our study in the radial geometry by assuming that the core $\Sigma=B_1$.

\begin{thm}\label{thm:main3}
Consider the elastic configuration $(\mathbf{C}_{\widetilde\lambda,\widetilde\mu},\bff)$, where $\mathbf{C}_{\widetilde\lambda,\widetilde\mu}$ is described in \eqref{eq:tensor1}--\eqref{eq:tensor2} with $c$ given in \eqref{eq:tensor3} and, $\Omega=B_R$ for a certain $R>1$ and $\Sigma= B_1$ .
 Consider the elastostatic system (\ref{eq:lame1}), with $\mathbf{C}_{\widetilde\lambda,\widetilde\mu}$ describe above. Let the source $\bff$ be given by (\ref{eq:source2}) with $\{\beta_k\} \in l^2(\mathbb{N},\mathbb{R})$, and
\begin{equation}\label{coeff_f_relation}
  \frac{\beta_k}{\gamma_{k-2}} =-\frac{ q^2 (\lambda + 3\mu)}{(k-2) (\lambda + \mu) (q^2-R^2) } \quad \quad k>2; \quad \beta_k=0, \quad k=1,2.
\end{equation}
Then for any $q>R^* := R^{3/2}$, the configuration $(\mathbf{C}_{\widetilde\lambda,\widetilde\mu},\bff)$ is non-resonant.
\end{thm}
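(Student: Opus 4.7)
The plan is to apply the primal variational principle of Remark~\ref{rem:primal}, which furnishes $\mathbf{E}_\delta(\mathbf{u}_\delta)\leq \mathbf{I}_\delta(\mathbf{v},\mathbf{w})$ for every admissible pair $(\mathbf{v},\mathbf{w})\in\mathcal{S}\times\mathcal{S}$ with $A\mathcal{L}_{\lambda,\mu}\mathbf{v} - \mathcal{L}_{\lambda,\mu}\mathbf{w} = \mathbf{f}$. It therefore suffices to exhibit such a pair for which $\mathbf{I}_\delta(\mathbf{v},\mathbf{w})$ stays bounded as $\delta \to +0$, thus ruling out the blow-up demanded by Definition~\ref{def:resonant}.

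First I would unpack the role of \eqref{coeff_f_relation}. Substituting the exterior formula for $\widehat{\bm{\psi}}_{k-2}$ from \eqref{eq:solution1} into $\int_{\partial B_q}\mathbf{f}\cdot\widehat{\bm{\psi}}_{k-2}\, ds$, and using Fourier orthogonality together with $\alpha=(\lambda+\mu)/(\lambda+3\mu)$, one obtains
\[
\int_{\partial B_q}\mathbf{f}\cdot\widehat{\bm{\psi}}_{k-2}\, ds = 2\pi q\, R^{2(k-2)}q^{-(k-2)}\Big[\gamma_{k-2} + \beta_k\frac{(k-2)(\lambda+\mu)(q^2-R^2)}{q^2(\lambda+3\mu)}\Big].
\]
The hypothesis \eqref{coeff_f_relation} is precisely the vanishing of the bracketed factor for every $k\geq 3$, while the exceptional cases $k=1,2$ are covered by $\beta_1=\beta_2=0$. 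Hence $\mathbf{f}$ is $L^2(\partial B_q)$-orthogonal to every exterior plasmone-wave trace $\widehat{\bm{\psi}}_{k-2}|_{\partial B_q}$. This orthogonality is exactly the cancellation that disarms the dual-principle mechanism used in Theorems~\ref{thm:nocore} and \ref{thm:main2} to produce resonance, and is what makes non-resonance possible once $q>R^*$.

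For the primal construction I would proceed mode by mode. Let $\mathbf{u}_0\in\mathcal{S}$ be the decaying Newton potential of $\mathbf{f}$, satisfying $\mathcal{L}_{\lambda,\mu}\mathbf{u}_0=\mathbf{f}$ in $\mathbb{R}^2$ and carrying finite $\mathbf{P}_{\lambda,\mu}$-energy since $\{\beta_k\},\{\gamma_k\}\in\ell^2$. Take $\mathbf{v}=\mathbf{u}_0$ on $\mathbb{R}^2\setminus\overline{B_R}$. Continue $\mathbf{v}$ into the shell $B_R\setminus\overline{B_1}$ as the mode-by-mode linear combination of interior pieces of the perfect plasmone waves $\widehat{\bm{\psi}}_{k-2}$ whose displacement traces on $\partial B_R$ match those of $\mathbf{u}_0$; the orthogonality from the previous paragraph ensures that no extra plasmonic source term has to be introduced at $\partial B_R$. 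Then continue $\mathbf{v}$ into the core $B_1$ by the unique elastic-harmonic interior extension matching the shell trace on $\partial B_1$. Because each $\widehat{\bm{\psi}}_{k-2}$ satisfies the plasmonic transmission condition across $\partial B_R$, the piecewise-defined $\mathbf{v}$ produces no Dirac contribution to $A\mathcal{L}_{\lambda,\mu}\mathbf{v}$ on $\partial B_R$; the only distributional residue of $A\mathcal{L}_{\lambda,\mu}\mathbf{v}-\mathbf{f}$ is a surface term on $\partial B_1$ coming from the core/shell mismatch of the conormal derivative. Finally, define $\mathbf{w}\in\mathcal{S}$ as the decaying solution of $\mathcal{L}_{\lambda,\mu}\mathbf{w} = A\mathcal{L}_{\lambda,\mu}\mathbf{v}-\mathbf{f}$, so that $(\mathbf{v},\mathbf{w})$ is admissible by construction.

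For the energy estimate, Lemma~\ref{lem:perfectwaves} yields that the mode-$k$ shell contribution to $\mathbf{P}_{\lambda,\mu}(\mathbf{v},\mathbf{v})$ is of order $\gamma_{k-2}^2\, k\, (R/q)^{2(k-2)}$, which is summable and gives $\delta\mathbf{P}_{\lambda,\mu}(\mathbf{v},\mathbf{v})=O(\delta)$. The mode-$k$ contribution to $\mathbf{P}_{\lambda,\mu}(\mathbf{w},\mathbf{w})$ is controlled by the squared $\partial B_1$-jump of the conormal derivative of $\mathbf{v}$, which, using the interior form in \eqref{eq:solution1}, scales like $k^{-1}\gamma_{k-2}^2 (R^3/q^2)^k$. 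The geometric factor $(R^3/q^2)^k$ is precisely the dual of the factor driving resonance in Theorem~\ref{thm:main2}, and under $q>R^*=R^{3/2}$ its base is strictly less than one, ensuring summability. To convert the bare bound $\mathbf{P}_{\lambda,\mu}(\mathbf{w},\mathbf{w})=O(1)$ into a $\delta$-bounded $\mathbf{I}_\delta$, I would truncate the Fourier series at a $\delta$-dependent level $N(\delta)$ exactly as in the proof of Theorem~\ref{thm:main2}, balancing the two contributions so that the tail of $\mathbf{w}$ shrinks fast enough in $\delta$ to compensate the $\delta^{-1}$ prefactor. The main technical obstacle is the sharp bookkeeping of the $\partial B_1$-jump that exposes the factor $(R^3/q^2)^k$ cleanly, so that the threshold $q=R^{3/2}$ emerges as the exact border of non-resonance; a secondary subtlety is verifying that the piecewise-defined $\mathbf{v}$ indeed lies in $H^1_{\mathrm{loc}}(\mathbb{R}^2)^2$ with the desired trace matchings across $\partial B_R$ and $\partial B_1$.
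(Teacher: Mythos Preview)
Your orthogonality reading of \eqref{coeff_f_relation} is correct and pleasant: the hypothesis does encode $\langle\mathbf{f},\widehat{\bm{\psi}}_{k-2}\rangle=0$ for every $k\ge 3$. However, the construction of $\mathbf{v}$ breaks down at $\partial B_R$. The exterior plasmone piece $\widehat{\bm{\psi}}_{k-2}|_{r>R}$ in \eqref{eq:solution1} is a \emph{decaying} elastic-harmonic function, whereas the Newton potential $\mathbf{u}_0$ of a source on $\partial B_q$ is, in the region $R<r<q$, built from the \emph{growing} interior modes (the ones regular at the origin). These live in complementary mode families; gluing $\mathbf{u}_0$ outside to the interior piece of $\widehat{\bm{\psi}}_{k-2}$ in the shell with matching Dirichlet traces on $\partial B_R$ does \emph{not} give matching weighted conormal fluxes, so $A\mathcal{L}_{\lambda,\mu}\mathbf{v}$ acquires a Dirac layer on $\partial B_R$ in addition to the one on $\partial B_1$. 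The orthogonality on $\partial B_q$ is a global pairing, not a local transmission identity, and does not prevent this.

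The energy claim $\delta\,\mathbf{P}_{\lambda,\mu}(\mathbf{v},\mathbf{v})=O(\delta)$ is correspondingly too optimistic, and this is where the threshold $R^{3/2}$ really enters. Any $\mathbf{v}$ that genuinely satisfies $A\mathcal{L}_{\lambda,\mu}\mathbf{v}=0$ across both $\partial B_1$ and $\partial B_R$ must, in the shell $1<r<R$, carry the $r^{-k}$-type modes (this is forced by the two transmission conditions; compare the paper's explicit $\hat{\mathbf{v}}_k^{(s)}$), and the scalar $\tau_k$ needed to reproduce the source jump at $\partial B_q$ is of order $\beta_k q^{-k}R^{2k}$. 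The resulting per-mode energy is $\sim|\beta_k|^2(R^2/q)^{2k}$, which \emph{diverges} for $R^{3/2}<q<R^2$. So a single $\delta$-independent $\mathbf{v}$ with finite $\mathbf{P}$-energy and residual only on $\partial B_1$ does not exist in that range; the truncation you mention is needed already for $\mathbf{v}$, not merely to tame the $\delta^{-1}$ in front of $\mathbf{P}(\mathbf{w},\mathbf{w})$.

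The paper's proof splits $\mathbf{f}=\mathbf{f}^{\text{low}}+\mathbf{f}^{\text{high}}$ at a $\delta$-dependent cutoff $k^*$ (chosen so that $R^{-k^*}\sim\delta$) and uses \emph{two different} explicit base functions: a four-piece $\hat{\mathbf{v}}_k$ (exact transmission at $\partial B_1$ and $\partial B_R$, large energy $\sim(R^2/q)^{2k}$) for $k\le k^*$, and a two-piece $\widehat{\mathbf{V}}_k$ (bounded energy, but residuals on both $\partial B_1$ and $\partial B_R$) for $k>k^*$; the latter residuals are absorbed into $\mathbf{w}_\delta$. Balancing $\delta(R^2/q)^{2k^*}$ against $\delta^{-1}(R/q)^{2k^*}$ yields the common bound $(R^3/q^2)^{k^*}$, uniformly controlled exactly when $q>R^{3/2}$. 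In that argument, relation \eqref{coeff_f_relation} is not used as an orthogonality but as the algebraic condition that lets a \emph{single} multiple $\tau_k\hat{\mathbf{v}}_k$ (respectively $\tau_k\widehat{\mathbf{V}}_k$) reproduce the coupled pair $\beta_k\mathbf{f}_{1,k}^q+\gamma_{k-2}\mathbf{f}_{2,k-2}^q$ through its conormal jump on $\partial B_q$.
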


\begin{proof}
We make use of the primal variational principle to show the non-resonance result. We shall construct the test function $(\bv_{\delta},\bw_{\delta})$, satisfying the constraint
\begin{equation}\label{eq:thm_constraint1}
   A\cl_{\lambda,\mu} \bv_{\delta} -\cl_{\lambda,\mu} \bw_{\delta} =\bff
\end{equation}
such that the energy along this sequence , $\mathbf{I}_{\delta}(\bv_{\delta},\bw_{\delta})$ remains bounded. To that end, our strategy is to decompose the source $\bff$ into a low-frequency part and a high-frequency part as that
\begin{equation}\label{f_decompose}
\begin{split}
\bff=&\bff^{\text{low}}+\bff^{\text{high}},\\
\bff^{\text{low}}:=\sum_{k=3}^{k^*} (\beta_k \bff_{1,k}^q + \gamma_k \bff_{2,k}^q)&, \quad \bff^{\text{high}}:=\sum_{k=k^*+1}^{\infty}  (\beta_k \bff_{1,k}^q + \gamma_k \bff_{2,k}^q),
\end{split}
\end{equation}
where $\bff_{1,k}^q$ and $\bff_{2,k}^q$ are given by (\ref{eq:fkq1}) and (\ref{eq:fkq2}), respectively, and $k^*$ will be chosen to depend on $\delta$. Indeed, we shall choose $k^*=k^*(\delta)$ to be the smallest integer such that $R^{-k^*} > \delta$, and this will be explicitly specified again in what follows. We then construct $(\bv_{\delta},\bw_{\delta})$ with $\bv_{\delta} =\bv_{\delta}^{\text{low}}  + \bv_{\delta}^{\text{high}}$ and $\bw_{\delta}=\bw_\delta^{\text{high}}$ as follows:
\begin{equation}\label{condi:v_low}
  \bv_{\delta}^{\text{low}} \quad \text{satisfies} \quad  A\cl_{\lambda,\mu} \bv_{\delta}^{\text{low}} =\bff^{\text{low}},
\end{equation}
\begin{equation}\label{condi:v_high}
  \bv_{\delta}^{\text{high}} \quad \text{satisfies} \quad  A\cl_{\lambda,\mu} \bv_{\delta}^{\text{high}}|_{\partial B_q(0)} =\bff^{\text{high}},
\end{equation}
\begin{equation}\label{condi:w_high}
 \bw_{\delta}^{\text{high}} \quad \text{satisfies} \quad    -\cl_{\lambda,\mu} \bw_{\delta}^{\text{high}} = -A\cl_{\lambda,\mu} \bv_{\delta}^{\text{high}} +\bff^{\text{high}}.
\end{equation}
This construction yields $(\bv_{\delta},\bw_{\delta})$, which satisfies the constraint (\ref{eq:thm_constraint1}) of the primal problem (\ref{eq:primal}). Furthermore, we shall show that with an appropriate choice of the cutoff integer $k^*=k^*(\delta)$ in (\ref{f_decompose}), $\mathbf{I}_{\delta}(\bv_{\delta},\bw_{\delta})$ remains bounded as $\delta \rightarrow +0$.

Next, we construct $\bv_{\delta}^{\text{low}}$. First, we present the base function $\hat{\bv}_{k}$ for our construction, which can be represented as follows:
\begin{equation}\label{eq:base1}
\hat{\bv}_k(x)=\begin{cases}
\hat{\bv}_k^{(c)}(x),\quad & r\leq 1,\\
\hat{\bv}_k^{(s)}(x),\quad & 1<r\leq R,\\
\hat{\bv}_k^{(f)}(x),\quad & R<r\leq q,\\
\hat{\bv}_k^{(e)}(x),\quad & r>q,
\end{cases}\qquad k=3,4,5,\ldots,
\end{equation}
with
\begin{align}
&\hat{\bv}_k^{(c)}(x):= \left[
  \begin{array}{c}
     r^k \cos(k \theta) - k \alpha (r^2-1)  r^{k-2} \cos((k-2) \theta)   \\
     r^k \sin(k \theta) + k \alpha (r^2-1)  r^{k-2} \sin((k-2) \theta) \\
  \end{array}
\right],
\quad \quad r \leq 1 , \\
&\hat{\bv}_k^{(s)}(x):= \left[
  \begin{array}{c}
    r^{-k} \cos(k \theta)  \\
    r^{-k} \sin(k \theta)   \\
  \end{array}
\right],
\quad  \quad 1<r \leq R,  \\
&\hat{\bv}_k^{(f)}(x):= R^{-2k} \left[
  \begin{array}{c}
     r^k \cos(k \theta) - k \alpha (r^2-R^2)  r^{k-2} \cos((k-2) \theta)  \\
     r^k \sin(k \theta) + k \alpha (r^2-R^2)  r^{k-2} \sin((k-2) \theta) \\
  \end{array}
\right],
\ R<r \leq q,  \\
&\hat{\bv}_k^{(e)}(x):= \left( \frac{q}{R} \right)^{2 k} \bigg(
\left[
  \begin{array}{c}
   r^{-k} \cos(k \theta) \\
   r^{-k} \sin(k \theta) \\
  \end{array}
\right]\nonumber\\
&+c_1 k
\left[
  \begin{array}{c}
   (k-2) \alpha (r^2 - q^2) r^{-k} \cos(k \theta) +   r^{-(k-2)} \cos((k-2) \theta) \\
   (k-2) \alpha (r^2 - q^2) r^{-k} \sin(k \theta) -   r^{-(k-2)} \sin((k-2) \theta) \\
  \end{array}
\right]
\bigg),\ r>q,
\end{align}
where 
\[
c_1 = \frac{ \alpha (R^2 - q^2)}{q^4}
\] 
and $\alpha$ is given in (\ref{coeff_alpha}). We note that $\hat{\bv}_{k} $ have the following properties:\smallskip
\begin{enumerate}
  \item $\hat{\bv}_{k} $ is continuous on all $\mathbb{R}^2$;\smallskip
  \item $\hat{\bv}_{k} $ satisfies $A\cl_{\lambda,\mu} \hat{\bv}_{k} =0 $ for $x\in \mathbb{R}^2 \backslash \partial B_q$;\smallskip
  \item Along $\partial B_q$, $\hat{\bv}_{k} $ has a jump in its normal flux:
\begin{equation}
\begin{split}
  \left[ \frac{\partial \hat{\bv}_{k} }{\partial \bm{\nu}} \right]_{\partial B_q} = & -c_2 k q^k R^{-2k} \left( q^2 (\lambda + 3\mu)\right)
\left[
  \begin{array}{c}
    \cos(k \theta) \\
    \sin(k \theta) \\
  \end{array}
\right] \\
    & + c_2 k q^k R^{-2k} \left( (k-2) (\lambda + \mu) (q^2-R^2) \right)
\left[
  \begin{array}{c}
    \cos((k-2) \theta) \\
   -\sin((k-2) \theta) \\
  \end{array}
\right],
\end{split}
\end{equation}
where $[\cdot]$ denote the jump of the normal flux and
\begin{equation} \label{coeff_c_3}
  c_2= \frac{4 \mu (\lambda + 2\mu)}{q^3 (\lambda + 3 \mu)^2} .
\end{equation}
\end{enumerate}
By using the properties listed above, it is easy to verify that with an appropriate constant multiple $\tau_k \hat{\bv}_{k}$, one has
\begin{equation}\label{eq:ppp1}
 A\cl_{\lambda,\mu} (\tau_k \hat{\bv}_{k}) |_{\partial B_q} = \beta_k \bff_{1,k}^q + \gamma_{k-2} \bff_{2,k-2}^q.
\end{equation}
Next we choose $\tau_k$ such that
\begin{equation}\label{eq:ppp2}
\tau_k \cdot \big[ -c_2 k q^k R^{-2k} \left( q^2 (\lambda + 3\mu)\right) \big] = \beta_k,
\end{equation}
and
\begin{equation}\label{eq:ppp3}
\tau_k \cdot \big[ c_2 k q^k R^{-2k} \left( (k-2) (\lambda + \mu) (q^2-R^2) \right) \big]= \gamma_{k-2}.
\end{equation}
With the help of (\ref{coeff_f_relation}), one has by direct calculations that
\begin{equation}\label{eq:ppp4}
\tau_k:= \frac{\beta_k}{-c_2 k  \left( q^2 (\lambda + 3\mu)\right)} q^{-k} R^{2k}.
\end{equation}
Now we set
\begin{equation}\label{eq:v_low}
\bv_{\delta}^{\text{low}}:=\sum_{k=3}^{k^*} \tau_k \hat{\bv}_{k},
\end{equation}
with $\tau_k$ given in \eqref{eq:ppp4}. By combining \eqref{eq:ppp1}--\eqref{eq:ppp4}, along with straightforward calculations, one can directly verify that $\bv_\delta^{\text{low}}$ defined in \eqref{eq:v_low} satisfies (\ref{condi:v_low}). 

After constructing $\bv_{\delta}^{\text{low}}$, we next give the construction of $\bv_{\delta}^{\text{high}}$ and $\bw_{\delta}^{\text{high}}$, respectively, in \eqref{condi:v_high} and \eqref{condi:w_high}. Similar to the construction of $\bv_\delta^{\text{low}}$ via a certain based function $\hat{\bv}_k$ in \eqref{eq:base1}. The construction of the function $\bv_{\delta}^{\text{high}}$ shall also be constructed from certain base functions $\widehat{\bV}_k$ for $k=3,4,5,\ldots$. Those functions do not fulfil $-A\cl_{\lambda,\mu} \bv =0$ on $\partial B_1$ or $\partial B_R$, but they are small along these curves. We introduce $\widehat{\bV}_k$ as follows:
\begin{equation}\label{eq:base2}
\widehat{\bV}_k(x)=\begin{cases}
\widehat{\bV}_k^{(i)}(x),\quad & r\leq q,\\
\widehat{\bV}_k^{(o)}(x),\quad & r>q,
\end{cases}\qquad\qquad k=3,4,5,\ldots,
\end{equation}
with
\begin{align}
& \widehat{\bV}_k^{(i)}(x):= \frac{c_3}{k} \left[
  \begin{array}{c}
     r^k \cos(k \theta) - k \alpha (r^2-q^2)  r^{k-2} \cos((k-2) \theta)  \\
     r^k \sin(k \theta) + k \alpha (r^2-q^2)  r^{k-2} \sin((k-2) \theta) \\
  \end{array}
\right]\nonumber\\
& \qquad\qquad\qquad+c_4 \left[
  \begin{array}{c}
     r^{k-2} \cos((k-2) \theta)  \\
     -r^{k-2} \sin((k-2) \theta) \\
  \end{array}
\right]
\quad \quad r\leq q ; \\
& \widehat{\bV}_k^{(o)}(x):= c_4 q^{2(k-2)} \left[
  \begin{array}{c}
   r^{-(k-2)} \cos((k-2) \theta) + (k-2) \alpha (r^2 - q^2) r^{-k} \cos (k \theta) \\
  -r^{-(k-2)} \sin((k-2) \theta) + (k-2) \alpha (r^2 - q^2) r^{-k} \sin (k \theta) \\
  \end{array}
\right]\nonumber\\
&\qquad\qquad\qquad + \frac{c_3}{k} q^{2k} \left[
  \begin{array}{c}
   r^{-k} \cos(k \theta) \\
   r^{-k} \sin(k \theta) \\
  \end{array}
\right] \qquad  \qquad r>q;
\end{align}
where
\[
 c_3= -(\lambda + 3 \mu)\quad \mbox{and} \quad c_4 = (\lambda + \mu) (q^2-R^2).
 \]
Recall that $A(x)=1$ in a neighborhood of $\|x\|=q$. One can show by direct calculations that the jump of the normal flux of $\widehat{\bV}_k$ in \eqref{eq:base2} on $\partial B_q$ is given by
\begin{equation}\label{eq:qqq1}
  \begin{split}
  \left[ \frac{\partial \hat{\bV}_{k} }{\partial \bm{\nu}} \right]_{\partial B_q(0)} = & -c_5 q^k \left( q^2 (\lambda + 3\mu)\right)
\left[
  \begin{array}{c}
    \cos(k \theta) \\
    \sin(k \theta) \\
  \end{array}
\right] \\
    & + c_5  q^k  \left( (k-2) (\lambda + \mu) (q^2-R^2) \right)
\left[
  \begin{array}{c}
    \cos((k-2) \theta) \\
   -\sin((k-2) \theta) \\
  \end{array}
\right]
\end{split}
\end{equation}
where
\[
c_5 = -\frac{4 \mu (\lambda + 2\mu)}{q^3 (\lambda + 3 \mu)}.
\]
Therefore if we set
\begin{equation}\label{eq:v_high}
 \bv_{\delta}^{\text{high}} := \sum_{k>k^*} \tau_k \hat{\bV}_k, \quad \tau_k= \frac{\beta_k}{-c_5 \left( q^2 (\lambda + 3\mu)\right)} q^{-k},
\end{equation}
then by using (\ref{coeff_f_relation}) and \eqref{eq:qqq1}, one can show by direct calculations that (\ref{condi:v_high}) is satisfied:
\begin{equation}\label{eq:qqq2}
  A\cl_{\lambda,\mu} \bv_{\delta}^{\text{high}}|_{\partial B_q} =\bff^{\text{high}}. 
\end{equation}
We emphasize that $\bv_{\delta}^{\text{high}}$ is not a solution to \eqref{eq:qqq2} on all of $\mathbb{R}^2$ due to that the normal fluxes at $\|x\|=1$ and $\|x\|=R$. In order to construct a solution to \eqref{eq:qqq2} on $\mathbb{R}^2$, we introduce $\bw_{\delta}^{\text{high}}$ as follows:
\begin{equation}\label{eq:qqq3}
  \begin{split}
   -\cl_{\lambda,\mu} \bw_{\delta}^{\text{high}} & = -A\cl_{\lambda,\mu} \bv_{\delta}^{\text{high}} +\bff^{\text{high}}\\
      & = -\sum_{k>k^*} \tau_k \left[ A \frac{\partial \bv_{\delta}^{\text{high}} }{\partial \bm{\nu}} \right]_{\partial B_1} -\sum_{k>k^*} \tau_k \left[ A \frac{\partial \bv_{\delta}^{\text{high}} }{\partial \bm{\nu}} \right]_{\partial B_R}
  \end{split}
\end{equation}
Clearly, $\bw_\delta^{\text{high}}$ satisfies \eqref{condi:w_high}.  With the test functions being ready, it remains to calculate the energy $\mathbf{I}_{\delta}(\bv_{\delta},\bw_{\delta})$, for the choice of $\bv_{\delta} =\bv_{\delta}^{\text{low}}  + \bv_{\delta}^{\text{high}}$ and $\bw_{\delta}=\bw_\delta^{\text{high}}$. In this step, we shall choose an appropriate cut-off frequency, $k^*=k^*(\delta)$ to ensure that $\mathbf{I}_{\delta}(\bv_{\delta},\bw_{\delta})$ remains uniformly bounded as $\delta \rightarrow +0$.

We first have by direct calculations that
\begin{equation}\label{energy_low_1}
\delta \mathbf{P}_{\lambda,\mu}(\bv_{\delta}^{\text{low}}, \bv_{\delta}^{\text{low}}) \leq C_{\lambda, \mu} \delta \sum_{k\leq k^*} |\beta_k|^2 \left( \frac{R^2}{q} \right)^{2k} \text{max}\left\{1, \left( \frac{q}{R^2} \right)^k \right\}^2.
\end{equation}
For the case when $q\geq R^2$, we easily have from \eqref{energy_low_1} that
\begin{equation}\label{eq:rrr1}
\delta \mathbf{P}_{\lambda,\mu}(\bv_{\delta}^{\text{low}}, \bv_{\delta}^{\text{low}}) \leq C_{\lambda, \mu} \delta \sum_{k\leq k^*} |\beta_k|^2 \leq C_{\lambda, \mu} \delta,
\end{equation}
which is obvious bounded. The other case when $R^* <q<R^2$ is much more subtle. It is noted that the estimate (\ref{energy_low_1}) can be simplified in the case when $R^* <q<R^2$ to be
\begin{equation}\label{energy_control_v_low}
\delta \mathbf{P}_{\lambda,\mu}(\bv_{\delta}^{\text{low}}, \bv_{\delta}^{\text{low}}) \leq C_{\lambda, \mu} \delta \sum_{k\leq k^*} |\beta_k|^2 \left( \frac{R^2}{q} \right)^{2k^*} .
\end{equation}
In what follows, we shall show that with a special choice of $k^*$, the RHS of \eqref{energy_control_v_low} can be bounded. 
The energy of $\bv_{\delta}^{\text{high}}$ is easy to control:
\begin{equation}\label{eq:rrr2}
 \delta \mathbf{P}_{\lambda,\mu}(\bv_{\delta}^{\text{high}}, \bv_{\delta}^{\text{high}}) \leq C_{\lambda, \mu} \delta \sum_{k} |\beta_k|^2 \leq C_{\lambda, \mu}. 
\end{equation}
Next, we study the energy due to $\bw_{\delta}$, and with the help of (\ref{eq:v_high}), along with the standard elliptic estimates, we have
\begin{equation}\label{energy_control_w_high}
  \begin{split}
    &  \frac{1}{\delta} \mathbf{P}_{\lambda,\mu}(\bw_{\delta}, \bw_{\delta}) \leq C \frac{1}{\delta} \|  -A\cl_{\lambda,\mu} \bv_{\delta}^{\text{high}} +\bff^{\text{high}} \|^2_{H^{-1}} \\
      & \leq C \frac{1}{\delta} \sum_{k>k^*} |\tau_k|^2 R^{2k} k \leq C \sum_{k>k^*} |\beta_k|^2  \frac{1}{\delta} \left( \frac{R}{q} \right)^{2k^*}
  \end{split}
\end{equation}
Balancing the right hand sides of the bounds in (\ref{energy_control_v_low}) and (\ref{energy_control_w_high}), we choose $k^*$ so that
\begin{equation}
  \delta \left( \frac{R^2}{q} \right)^{2k^*} \thicksim   \frac{1}{\delta} \left( \frac{R}{q} \right)^{2k^*};
\end{equation}
namely we choose $k^*=k^*(\delta)$ to be the smallest integer with $R^{-k^*}<\delta$ such that
\begin{equation}\label{choosing_k}
  \delta \leq R^{-k^*+1} \quad \text{and} \quad \frac{1}{\delta} \leq R^{k^*}.
\end{equation}
Combining (\ref{choosing_k}) with (\ref{energy_control_v_low}) and (\ref{energy_control_w_high}), we obtain
\begin{equation}
    \delta \mathbf{P}_{\lambda,\mu}(\bv_{\delta}^{\text{low}}, \bv_{\delta}^{\text{low}}) \leq C_{\lambda, \mu} \sum_{k\leq k^*} |\beta_k|^2 \left( \frac{R^3}{q^2} \right)^{k^*(\delta)};
\end{equation}
and
\begin{equation}
   \frac{1}{\delta} \mathbf{P}_{\lambda,\mu}(\bw_{\delta}, \bw_{\delta}) \leq C \sum_{k>k^*} |\beta_k|^2  \left( \frac{R^3}{q^2} \right)^{k^*(\delta)}.
\end{equation}
Hence, if $q>R^* = R^{3/2}$, $\mathbf{I}_{\delta}(\bv_{\delta}, \bw_{\delta})$ is bounded as $\delta \rightarrow +0$. Therefore, by \eqref{eq:primalp6}, the elastic configuration is non-resonant.

The proof  is complete.

\end{proof}

\section{Are there perfect plasmon waves in three dimensions?}\label{sect:4d}

As can be seen from our earlier study, the perfect plasmone elastic waves in Lemma~\ref{lem:perfectwaves} play a critical role in establishing the resonance and non-resonance results. Hence, if one intends to extend the resonance and non-resonance results to the three-dimensional case, it would be natural to ask whether there exist perfect plasmone elastic waves in $\mathbb{R}^3$. First of all, we consider the base solutions to the equation, $\mathcal{L}^{\lambda, \mu} \mathbf{u} = 0$, in three dimensions and they are given as follows:
\begin{align}
& \mathbf{M}_n(x):=\curl \{ x r^n Y_n(\hat{x})\};  \quad  \curl\, \mathbf{M}_n(x); \quad \nabla r^n Y_n(\hat{x});\\
& \mathbf{N}_n(x):=\curl\{ x r^{-n-1} Y_n(\hat{x})\};  \quad  \curl\, \mathbf{N}_n(x); \quad \nabla r^{-n-1} Y_n(\hat{x});
\end{align}
where $\hat x:=x/\|x\|\in\mathbb{S}^2$ for $x\in\mathbb{R}^3\backslash\{0\}$; and $Y_n(x)$ is the spherical harmonics of order $n$ for $n\in\mathbb{N}\cup\{0\}$. 
By direct calculations, we have
\begin{align}
 \mathbf{M}_n(x)=& r^n \text{Grad}\, Y_n(\hat{x})\times \hat{x},\\
\mathbf{N}_n(x)=& r^{-n-1} \text{Grad}\, Y_n(\hat{x})\times \hat{x}. 
\end{align}
Set
\begin{equation}
  \mathbf{B}=\text{Grad}\, Y_n(\hat{x})\times \hat{x} ,
\end{equation}
and the first components of $\frac{\partial \mathbf{M}_n} {\partial \bm{\nu}}$, $\frac{\partial \mathbf{N}_n} {\partial \bm{\nu}}$ are, respectively, given as follows:
\begin{equation}
 \bigg(\frac{\partial \mathbf{M}_n} {\partial \bm{\nu}}\bigg)_{  1 }= \sum_{j=1}^3 \mu \bm{\nu}_j \left( n r^{n-2} x_1 \mathbf{B}_j + r^n \frac{\partial \mathbf{B}_j}{\partial x_1}   \right) + \mu n r^{n-1} \mathbf{B}_1,
\end{equation}
\begin{equation}
 \bigg( \frac{\partial \mathbf{N}_n} {\partial \bm{\nu}} \bigg)_1 = \sum_{j=1}^3 \mu \bm{\nu}_j \left( (-n-1) r^{-n-3} x_1 \mathbf{B}_j + r^{-n-1} \frac{\partial \mathbf{B}_j}{\partial x_1}   \right) + \mu (-n-1) r^{-n-2} \mathbf{B}_1,
\end{equation}
where $\mathbf{B}_j$ is the $j$th-component of $\mathbf{B}$. Hence, there does not exist a plasmone constant $c$ such that
\begin{equation}
 c  \frac{\partial \mathbf{M}_n} {\partial \bm{\nu}}  =  \frac{\partial \mathbf{N}_n} {\partial \bm{\nu}}  .
\end{equation}
Therefore, it seems that there are no perfect plasmon waves in three dimensions. We shall address this issue in our future investigation.

\section*{Acknowledgement}

The work was supported by the FRG grants from Hong Kong Baptist University, Hong Kong RGC General Research Funds, 12302415 and 405513, and the NSF grant of China, No. 11371115.

\end{document}